\newtheorem{thm}{Theorem}
\newtheorem{lem}{Lemma}
\theoremstyle{definition}
\def\-{\mbox{--}}
\newtheorem{pro}{Proposition}
\newtheorem{obs}{Observation}
\begin{document}
\title{\bf The 3-rainbow index of a graph\Large\bf \footnote{Supported by NSFC No.11071130.}}
\author{\small Lily Chen, Xueliang~Li, Kang Yang, Yan~Zhao\\
\small Center for Combinatorics and LPMC-TJKLC\\
\small Nankai University, Tianjin 300071, China\\
\small lily60612@126.com; lxl@nankai.edu.cn; \\
\small yangkang@mail.nankai.edu.cn; zhaoyan2010@mail.nankai.edu.cn}
\date{}
\maketitle
\begin{abstract}

Let $G$ be a nontrivial connected graph with an edge-coloring $c:
E(G)\rightarrow \{1,2,\ldots, q\},$ $q \in \mathbb{N}$, where
adjacent edges may be colored the same. A tree $T$ in $G$ is a
$rainbow~tree$ if no two edges of $T$ receive the same color. For a
vertex subset $S\subseteq V(G)$, a tree that connects $S$ in $G$ is
called an $S$-tree. The minimum number of colors that are needed in
an edge-coloring of $G$ such that there is a rainbow $S$-tree for
each $k$-subset $S$ of $V(G)$ is called $k$-rainbow index, denoted
by $rx_k(G)$. In this paper, we first determine the graphs whose
$3$-rainbow index equals $2$, $m,$ $m-1$, $m-2$, respectively. We
also obtain the exact values of $rx_3(G)$ for regular complete
bipartite and multipartite graphs and wheel graphs. Finally, we give
a sharp upper bound for $rx_3(G)$ of $2$-connected graphs and 2-edge
connected graphs, and graphs whose $rx_3(G)$ attains the upper bound
are characterized.

{\flushleft\bf Keywords}: rainbow tree, $S$-tree, $k$-rainbow index.

{\flushleft\bf AMS subject classification 2010}: 05C05, 05C15, 05C75.

\end{abstract}

\section{Introduction}
All graphs considered in this paper are simple, finite and
undirected. We follow the terminology and notation of Bondy and
Murty \cite{Bondy}. Let $G$ be a nontrivial connected graph with an
edge-coloring $c: E(G)\rightarrow \{1,2,\ldots,q\},$ $q \in
\mathbb{N}$, where adjacent edges may be colored the same. A path of
$G$ is a \emph{rainbow path} if no two edges of the path are colored
the same. The graph $G$ is \emph{rainbow connected} if for every two
vertices $u$ and $v$ of $G$, there is a rainbow path connecting $u$
and $v$. The minimum number of colors for which there is an
edge-coloring of $G$ such that $G$ is rainbow connected is called
the \emph{rainbow connection number} of $G$, denoted by $rc(G)$.
Results on the rainbow connections can be found in
\cite{Caro,Chartrand1,Chartrand,LS, Sun}.

These concepts were introduced by Chartrand et al. in
\cite{Chartrand1}. In \cite{Zhang}, they generalized the concept of
rainbow path to rainbow tree. A tree $T$ in $G$ is a $rainbow~tree$
if no two edges of $T$ receive the same color. For $S\subseteq V
(G)$, a $rainbow\ $S-$tree$ is a rainbow tree that connects the
vertices of $S$. Given a fixed integer $k$  with $2\leq k \leq n$,
the edge-coloring $c$ of $G$ is called a $k$-$rainbow~coloring$ if
for every $k$-subset $S$ of  $V(G)$, there exists a rainbow
$S$-tree. In this case, $G$ is called $k$-$rainbow~connected$. The
minimum number of colors that are needed in a $k$-$rainbow~coloring$
of $G$ is called the $k$-$rainbow~index$ of $G$, denoted by
$rx_k(G)$. Clearly, when $k=2$, $rx_2(G)$ is nothing new but the
rainbow connection number $rc(G)$ of $G$. For every connected graph
$G$ of order $n$, it is easy to see that $rx_2(G)\leq rx_3(G)\leq
\cdots \leq rx_n(G)$.

The $Steiner~distance$ $d(S)$ of a subset $S$ of vertices in $G$ is
the minimum size of a tree in $G$ that connects $S$. Such a tree is
called a $Steiner$ $S$-$tree$ or simply a $Steiner~tree$. The
$k$-$Steiner~diameter$ $sdiam_k(G)$ of $G$ is the maximum Steiner
distance of $S$ among all $k$-subsets $S$ of $G$. Then there is a
simple upper bound and a lower bound for $rx_k(G)$.

\begin{obs}[\cite{Zhang}]\label{obs1}
For every connected graph $G$ of order $n\geq 3$ and each integer $k$ with $3\leq k\leq n$,
$k-1\leq sdiam_k(G)\leq rx_k(G)\leq n-1$.
\end{obs}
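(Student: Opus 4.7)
The statement packages three inequalities, and I would prove them separately in the order they are written, since each one is essentially immediate from the definitions.

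For the leftmost inequality $k-1\leq sdiam_k(G)$, I would observe that any tree in $G$ that connects a $k$-subset $S$ must contain all $k$ vertices of $S$, so it has at least $k$ vertices and therefore at least $k-1$ edges. Hence $d(S)\geq k-1$ for every $k$-subset $S$, and taking the maximum yields $sdiam_k(G)\geq k-1$.

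For the middle inequality $sdiam_k(G)\leq rx_k(G)$, I would fix an optimal edge-coloring $c$ of $G$ with $rx_k(G)$ colors and choose a $k$-subset $S^\ast$ that realizes $sdiam_k(G)$. By definition of $rx_k(G)$, there is a rainbow $S^\ast$-tree $T$ under $c$. Since $T$ connects $S^\ast$ it has at least $d(S^\ast)=sdiam_k(G)$ edges, and being rainbow these edges carry pairwise distinct colors; so at least $sdiam_k(G)$ colors are used, giving the inequality. For the upper bound $rx_k(G)\leq n-1$, I would take any spanning tree $T$ of $G$, color its $n-1$ edges with $n-1$ distinct colors, and color every chord of $T$ arbitrarily (reusing any of these colors). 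Given a $k$-subset $S$, the minimal subtree of $T$ that contains $S$ is a subtree of $T$ whose edges all have distinct colors, so it is a rainbow $S$-tree. Thus this coloring is a $k$-rainbow coloring with $n-1$ colors.

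There is no real obstacle here: all three bounds follow from counting edges in trees and from using a rainbow spanning tree as a universal witness. The only caveat I would flag is being careful in the middle step to apply the rainbow condition to a subset $S^\ast$ that actually attains the Steiner diameter, rather than to an arbitrary $S$.
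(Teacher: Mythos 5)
Your proof is correct: all three inequalities follow exactly as you argue (edge count of any $S$-tree for the lower bound, a rainbow $S^\ast$-tree at a Steiner-diameter-attaining set for the middle inequality, and a rainbow-colored spanning tree for the upper bound). The paper imports this observation from \cite{Zhang} without proof, and your argument is the standard one intended there, including the correct care taken to apply the rainbow condition at a set realizing $sdiam_k(G)$.
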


They showed that trees are composed of a class of graphs whose
$k$-rainbow index attains the upper bound.
\begin{pro}[\cite{Zhang}]\label{pro1}
Let $T$ be a tree of order $n\geq 3$. For each integer $k$ with $3\leq k\leq n$, $rx_k(T)=n-1$.
\end{pro}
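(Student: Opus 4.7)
The upper bound $rx_k(T) \le n-1$ is immediate from Observation~\ref{obs1} (and is also witnessed by coloring the $n-1$ edges of $T$ with pairwise distinct colors, under which every subtree is trivially rainbow). The substance of the proposition is therefore the matching lower bound. My plan is to prove it by contradiction: suppose some $k$-rainbow coloring of $T$ uses fewer than $n-1$ colors, so that two distinct edges $e_1, e_2$ share a color, and then exhibit a $k$-subset $S \subseteq V(T)$ whose unique Steiner tree in $T$ contains both $e_1$ and $e_2$, contradicting the rainbow hypothesis.

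The feature of trees that drives this argument is rigidity of Steiner trees: any $S$-tree in $T$ must contain the smallest subtree that spans $S$, and in particular, for any $u, v \in S$ every $S$-tree contains every edge of the unique $u$--$v$ path in $T$. So the argument reduces to the following combinatorial claim: for any two distinct edges $e_1, e_2$ of $T$, there exist vertices $u, v \in V(T)$ whose $u$--$v$ path in $T$ uses both $e_1$ and $e_2$. Granting this claim, I pad $\{u, v\}$ with any $k-2$ further vertices to obtain a $k$-subset $S$; the Steiner $S$-tree then contains both $e_1$ and $e_2$, so no rainbow $S$-tree can exist.

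I plan to verify the claim by a short case analysis. If $e_1$ and $e_2$ share a vertex $w$, say $e_1 = aw$ and $e_2 = wb$, then $u = a$, $v = b$ works. Otherwise $e_1$ and $e_2$ are disjoint, and since both are bridges of $T$, the forest $T - \{e_1, e_2\}$ has exactly three components; after labeling $e_1 = a_1b_1$ and $e_2 = a_2b_2$ so that $b_1$ and $a_2$ lie in the common (middle) component, the path from $a_1$ to $b_2$ in $T$ has the form $a_1b_1 \cdots a_2b_2$ and uses both edges. Both verifications are routine, relying only on the uniqueness of paths in a tree, so I do not expect any real obstacle; the whole argument is essentially an exploitation of the rigid structure of Steiner trees in a tree, with the only mild subtlety being the case split between incident and non-incident edges.
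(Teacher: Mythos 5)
Your proof is correct and follows essentially the same route as the paper: the upper bound is Observation~\ref{obs1}, and the lower bound amounts to the fact that any two edges of a tree are bridges and hence must receive distinct colors in any $k$-rainbow coloring, which is exactly Observation~\ref{obs2} (your case analysis on incident versus disjoint edges is just a direct verification of that observation in the tree setting). Nothing further is needed.
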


Before showing Proposition \ref{pro1}, they gave the following observation.
\begin{obs}[\cite{Zhang}]\label{obs2}
Let $G$ be a connected graph of order $n$ containing two bridges $e$
and $f$. For each integer $k$ with $2\leq k\leq n$, every
$k$-rainbow coloring of $G$ must assign distinct colors to $e$ and
$f$.
\end{obs}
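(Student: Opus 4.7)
The plan is to exhibit, for each $k$ with $2\le k\le n$, a specific $k$-subset $S\subseteq V(G)$ with the property that every $S$-tree in $G$ is forced to contain both bridges $e$ and $f$. Once such an $S$ is in hand, the fact that a rainbow $S$-tree must exist in any $k$-rainbow coloring immediately forces $e$ and $f$ to receive different colors.

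First I would analyze how the two bridges partition $V(G)$. Write $e=xy$; since $e$ is a bridge, $G-e$ has exactly two components, call them $X\ni x$ and $Y\ni y$. Because $e$ is the unique edge across the cut $[X,Y]$, every other edge of $G$ (in particular $f$) must have both endpoints on the same side, and without loss of generality we may assume both endpoints of $f$ lie in $Y$. The bridge $f$ is also a bridge of the connected subgraph $Y$ (a cycle through $f$ inside $Y$ would be a cycle through $f$ in $G$), so $Y-f$ splits into two components $Y_1$ and $Y_2$. Relabelling if necessary, assume $y\in Y_1$. Then $G-\{e,f\}$ has exactly the three components $X$, $Y_1$, $Y_2$.

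Next I would choose $S$ to contain one vertex $s_1\in X$ together with one vertex $s_2\in V(Y_2)$, and, if $k\ge 3$, I would add any $k-2$ further vertices of $G$ arbitrarily. The key point is twofold: removing $e$ from $G$ already separates $s_1$ from $s_2$ (since $s_1\in X$ and $s_2\in Y$), and removing $f$ alone also separates them, because the two components of $G-f$ are $X\cup V(Y_1)$ and $V(Y_2)$, with $s_1\in X\cup V(Y_1)$ but $s_2\in V(Y_2)$. Consequently, any $s_1$-$s_2$ path in $G$ must traverse both $e$ and $f$; hence every $S$-tree $T$ (which contains such a path) uses both bridges. If $T$ is rainbow, the edges $e$ and $f$ must carry distinct colors, which is the desired conclusion.

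I do not foresee a serious obstacle. The main bookkeeping is verifying that $f$ lies entirely on one side of the cut produced by $e$ (immediate from $e$ being a bridge) and, symmetrically, that $e$ lies on one side of the cut produced by $f$ (which is what the relabelling $y\in Y_1$ achieves). After this setup, the conclusion is a direct consequence of the definition of a rainbow tree.
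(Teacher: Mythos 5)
Your proof is correct: writing $G-\{e,f\}$ as the three components $X$, $Y_1$, $Y_2$ and choosing $s_1\in X$, $s_2\in Y_2$ does force every $S$-tree to contain the $s_1$--$s_2$ path through both bridges, so a rainbow $S$-tree requires $c(e)\neq c(f)$. The paper itself states this observation without proof (it is quoted from the cited reference of Chartrand, Okamoto and Zhang), and your argument is exactly the standard one; the only detail worth making explicit is the one you already handle, namely that $f$ remains a bridge of the component $Y$ of $G-e$ because any cycle of $Y$ through $f$ would be a cycle of $G$ through $f$.
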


For $k=2$, $rx_2(G)=rc(G)$, which has been studied extensively, see
\cite{LS, Sun}. But for $k\geq 3$, very few results has been
obtained. In this paper, we focus on $k=3$. By Observation
\ref{obs1}, we have $rx_3(G)\geq 2$. On the other hand, if $G$ is a
nontrivial connected graph of size $m$, then the coloring that
assigns distinct colors to the edges of $G$ is a $3$-rainbow
coloring, hence $rx_3(G)\leq m$. So we want to determine the graphs
whose 3-rainbow index equals the values $2,$ $m$, $m-1$ and $m-2$,
respectively. The following results are needed.

\begin{lem}[\cite{Zhang}]\label{lem1}
For $3\leq n\leq 5$, $rx_3(K_n)=2$.
\end{lem}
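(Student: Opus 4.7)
The lower bound $rx_3(K_n) \geq 2$ is immediate: any $3$-subset of vertices has Steiner distance at least $2$, so $sdiam_3(K_n) \geq 2$, and Observation~\ref{obs1} gives $rx_3(K_n) \geq sdiam_3(K_n) \geq 2$. The entire problem therefore reduces to exhibiting, for each $n \in \{3, 4, 5\}$, a $2$-edge-coloring of $K_n$ in which every $3$-subset is connected by a rainbow tree.

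For $n = 3$ the construction is trivial: color two of the three edges with distinct colors, and the path formed by these two (adjacent) edges is a rainbow tree spanning all three vertices. For $n = 4$, I would label the vertices $v_1, v_2, v_3, v_4$, color the $4$-cycle $v_1v_2v_3v_4v_1$ alternately with $1, 2, 1, 2$, and give the diagonals $v_1v_3$ and $v_2v_4$ the colors $1$ and $2$ respectively; a direct check of each of the four $3$-subsets produces a rainbow path of length $2$ along the outer cycle.

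The main case is $n = 5$. Here the idea is to exploit the classical decomposition of $K_5$ into two edge-disjoint Hamiltonian cycles, namely the ``outer'' cycle $v_1v_2v_3v_4v_5v_1$ and the pentagram $v_1v_3v_5v_2v_4v_1$; color the first with $1$ and the second with $2$. The cyclic rotation $v_i \mapsto v_{i+1}$ preserves both cycles simultaneously and hence preserves the coloring, so it suffices to check one representative from each orbit of $3$-subsets under this rotation action. There are exactly two such orbits: ``consecutive'' triples, represented by $\{v_1, v_2, v_3\}$, which admits the rainbow path $v_3\text{--}v_1\text{--}v_2$ with colors $2, 1$; and ``non-consecutive'' triples, represented by $\{v_1, v_2, v_4\}$, which admits the rainbow path $v_1\text{--}v_2\text{--}v_4$ with colors $1, 2$.

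The principal obstacle is really the case $n = 5$: with only two colors available and ten edges in $K_5$, monochromatic triangles are unavoidable, so one must design the coloring so that every $3$-subset still possesses a bichromatic path of length $2$ through some vertex of (or adjacent to) the subset. The two-Hamiltonian-cycle decomposition is precisely the structural choice that achieves this, since every vertex then has two edges of each color available to serve as ``pivots'' for rainbow paths, and the rotational symmetry collapses the verification to just the two cases above.
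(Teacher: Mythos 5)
Your proof is correct. Note that the paper itself gives no proof of this lemma---it is quoted from \cite{Zhang}---so there is nothing internal to compare against; your argument is a valid self-contained verification. The lower bound via $sdiam_3(K_n)\geq 2$ and Observation~\ref{obs1} is exactly right, the $n=3$ and $n=4$ colorings check out, and for $n=5$ the decomposition of $K_5$ into two edge-disjoint Hamiltonian cycles (outer pentagon colored $1$, pentagram colored $2$) together with the rotation $v_i\mapsto v_{i+1}$, which is a color-preserving automorphism, correctly reduces the verification to the two orbit representatives $\{v_1,v_2,v_3\}$ (gap pattern $(1,1,3)$) and $\{v_1,v_2,v_4\}$ (gap pattern $(1,2,2)$), both of which you handle with explicit rainbow paths of length $2$. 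This is a clean way to organize the case analysis that a brute-force check of all ten triples would otherwise require.
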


\begin{lem}[\cite{Zhang}]\label{lem2}
Let $G$ be a connected graph of order $n\geq 6$. For each integer
$k$ with $3\leq k\leq n$, $rx_k(G)\geq 3$.
\end{lem}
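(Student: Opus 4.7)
The plan is to show $rx_3(G)\ge 3$, which suffices since $rx_3(G)\le rx_k(G)$ for every $k\ge 3$. I would assume for contradiction that $G$ admits a $3$-rainbow edge-coloring with only two colors and derive $n\le 5$. A rainbow tree can use each color at most once and so has at most $2$ edges; since a tree spanning a $3$-subset $S$ needs at least $2$ edges, every $3$-subset $S$ must carry a $2$-edge rainbow path on the three vertices of $S$. In particular, among any three vertices at least two of the three pairs are edges of $G$, so the complement $\overline{G}$ contains no $P_3$ and is therefore a matching.

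Next I would split on whether $\overline{G}$ is empty. If $\overline{G}=\emptyset$ then $G\supseteq K_6$, and Ramsey's bound $R(3,3)=6$ forces a monochromatic triangle in the given $2$-coloring; its three vertices admit no rainbow $S$-tree, a contradiction. Otherwise pick $uv\in E(\overline{G})$. For every other vertex $w$ we have $uw,vw\in E(G)$, and the unique $2$-edge path on $\{u,v,w\}$ has $w$ as its middle vertex, forcing $c(uw)\ne c(vw)$. Partition $V(G)\setminus\{u,v\}$ into $A=\{w:c(uw)=1\}$ and $B=\{w:c(uw)=2\}$.

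The pigeonhole step aims to show $|A|,|B|\le 2$. Indeed, if $|A|\ge 3$ then among any three vertices of $A$ some pair $w_1,w_2$ is adjacent in $G$ (in the matching $\overline{G}$ each vertex has at most one non-neighbor), and then $\{u,w_1,w_2\}$ forces $c(w_1w_2)=2$ while $\{v,w_1,w_2\}$ forces $c(w_1w_2)=1$, a contradiction. Hence $n-2=|A|+|B|\le 4$, giving $n=6$ with $|A|=|B|=2$; the two vertices in each class must moreover be non-adjacent in $G$ (otherwise the same argument applies), so $\overline{G}$ is a perfect matching and $G=K_{2,2,2}$.

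The hard part will be the remaining octahedron case, which the pigeonhole step does not rule out. Labelling the partite classes $\{u,v\}$, $\{w_1,w_2\}$, $\{x_1,x_2\}$, the $3$-subsets that contain two vertices of one partite class pin down, up to a global swap of colors, the entire coloring on the edges incident to $\{u,v\}$: one obtains $c(uw_1)=c(vw_2)$ and $c(uw_2)=c(vw_1)$, together with an analogous pattern on the edges from $\{u,v\}$ to $\{x_1,x_2\}$ parameterised by a choice $\alpha\in\{1,2\}$. The four edges between $\{w_1,w_2\}$ and $\{x_1,x_2\}$ form a $C_4$ whose opposite edges must share a color and whose adjacent edges must differ, so they too are determined up to a choice $a\in\{1,2\}$. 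For each of the four pairs $(\alpha,a)$, an inspection shows that one of the eight triangles $\{u,w_i,x_j\}$ or $\{v,w_i,x_j\}$ turns out monochromatic, so no rainbow $S$-tree exists on its vertex set. This final contradiction completes the proof.
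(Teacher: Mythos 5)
Your argument is correct, but note that the paper offers no proof to compare against: Lemma~\ref{lem2} is imported from \cite{Zhang} and stated without proof, so your write-up is a genuine reconstruction. The skeleton is sound: with two colors a rainbow $S$-tree for $|S|=3$ must be a properly $2$-colored path of length $2$ on exactly the vertices of $S$, whence $\overline{G}$ is a matching; Ramsey ($R(3,3)=6$) kills the complete case; and the $A/B$ pigeonhole kills $n\ge 7$. One remark on efficiency and internal consistency: your final ``hard part'' is unnecessary. Once $|A|=2$ with $A=\{w_1,w_2\}$, the triple $\{u,w_1,w_2\}$ already yields a contradiction regardless of whether $w_1w_2\in E(G)$ --- if they are adjacent you use the $\{u,w_1,w_2\}$/$\{v,w_1,w_2\}$ clash as you do, and if they are non-adjacent the only candidate path is $w_1uw_2$, whose two edges are both colored $1$ by the very definition of $A$. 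So the proof can stop before the octahedron ever appears; indeed, the coloring you set up in the last paragraph (with $c(uw_1)\neq c(uw_2)$) is incompatible with $\{w_1,w_2\}\subseteq A$, which signals that the case is vacuous rather than ``hard.'' That said, your standalone analysis of $K_{2,2,2}$ is also correct as described (for each choice of the two free parameters one of the triangles at $u$ is monochromatic), so the overall proof remains valid --- it is merely longer than it needs to be.
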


\begin{thm}[\cite{Zhang}]\label{thm1}
For each integer $k$ and $n$ with $3\leq k\leq n$, \begin{equation}
 rx_k(C_n)=
   \begin{cases}
      n-2, & \text{if $k=3$ and $n\geq4$}; \\
     n-1, & \text{if $k=n=3$ or $4\leq k\leq n$}.
    \end{cases}
\end{equation}
\end{thm}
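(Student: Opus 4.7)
The plan is to separate the analysis into three regimes: $k=3$ with $n\ge 4$, $k=n=3$, and $4\le k\le n$. The key structural observation, used throughout, is that since $C_n$ has a unique cycle (itself), every connected subgraph of $C_n$ is a subpath; hence an $S$-tree in $C_n$ is a subpath containing $S$, equivalently $C_n$ minus a contiguous block $T$ of edges lying inside one of the arcs into which the vertices of $S$ partition the cycle, and it is rainbow iff $T$ contains at least one edge of every repeated colour.

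For $k=3$ and $n\ge 4$, the upper bound $rx_3(C_n)\le n-2$ comes from the colouring $c(e_1)=c(e_3)=1$, $c(e_2)=c(e_4)=2$, and $c(e_i)=i-2$ for $i\ge 5$, using $n-2$ colours. For an arbitrary $3$-subset $S$ I would verify that some arc $A_j$ of $S$ meets both pairs $\{e_1,e_3\}$ and $\{e_2,e_4\}$: since $e_1,\ldots,e_4$ are four consecutive edges alternating between the two pairs, any $A_j$ containing at least two of them automatically contains one of each pair, and pigeonholing four edges into three arcs guarantees such an $A_j$. For the matching lower bound I would prove (a) no colour appears on $\ge 3$ edges, since three same-coloured edges split $C_n$ into three gaps and placing one $S$-vertex in each gap yields a $3$-subset whose arcs each contain exactly one of these edges, so every $S$-tree contains at least two; and (b) at most two colours appear twice, since three repeated pairs furnish six distinguished edges for which a cyclic $(2,2,2)$ split by some $3$-subset $S$ is always realisable (each arc then misses one of the three colours), blocking every block $T$ from breaking all three pairs. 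Combining (a) and (b) forces $t\ge n-2$.

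The case $k=n=3$ is immediate: $sdiam_3(C_3)=2$ by Observation \ref{obs1}, and two colours suffice since any $2$-colouring of the triangle has some spanning path that is rainbow, giving $rx_3(C_3)=2=n-1$. For $4\le k\le n$ the upper bound $rx_k(C_n)\le n-1$ is achieved by assigning $e_1,\dots,e_{n-1}$ distinct colours and reusing colour~$1$ on $e_n$: the Hamiltonian path $v_1v_2\cdots v_n$ is then a rainbow $S$-tree for every $S\subseteq V(C_n)$.

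The main obstacle is the lower bound $rx_k(C_n)\ge n-1$ for $k\ge 4$. If some colour is used on $\ge 3$ edges, the $3$-subset obstruction of (a) above extends to a bad $k$-subset by padding with $k-3$ arbitrary vertices, because any rainbow $k$-tree for the padded set would restrict to a rainbow tree on the original three. The delicate case is two disjoint colour pairs $\{e_a,e_b\}$ and $\{e_c,e_d\}$; here I would take $S=\{v_a,v_{a+1},v_b,v_{b+1}\}$ when $e_a,e_b$ are non-adjacent, and $S=\{v_a,v_{a+1},v_{a+2},w\}$ with an arbitrary extra vertex $w$ when they are adjacent. The key claim is that every arc of $C_n$ containing $S$ is either (i) a short arc that uses both $e_a$ and $e_b$, or (ii) one of the two Hamiltonian paths $C_n-e_a$ and $C_n-e_b$. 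This is forced because the endpoints of $e_a,e_b$ that appear in $S$ split between the two components of $C_n\setminus\{e_a,e_b\}$, so no arc avoiding both can contain $S$, and a case check shows that every wrapping arc using exactly one of the two edges must in fact be the corresponding Hamiltonian path. In case (i) pair~$1$ is repeated in the $S$-tree, and in case (ii) both edges of pair~$2$ survive in the Hamiltonian path, so pair~$2$ is repeated. Hence $S$ has no rainbow $S$-tree, and padding with $k-4\ge 0$ arbitrary extra vertices yields the required bad $k$-subset.
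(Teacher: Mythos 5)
Theorem \ref{thm1} is quoted from \cite{Zhang} and the paper gives no proof of it, so there is no in-paper argument to compare against; what you have written is a self-contained proof, and after checking it I believe it is correct. Your reduction of all $S$-trees in $C_n$ to ``the cycle minus a nonempty contiguous block lying inside one arc of $S$'' is the right normal form, and every case (the alternating $1,2,1,2$ colouring plus pigeonhole for the upper bound when $k=3$; the three-gap obstruction for a colour of multiplicity $\ge 3$; the two-pair obstruction with $S=\{v_a,v_{a+1},v_b,v_{b+1}\}$ for $k\ge 4$) goes through. Two small points deserve an explicit sentence rather than an assertion. First, in step (b) you claim a cyclic $(2,2,2)$ split of the six distinguished edges ``is always realisable''; the clean justification is that any arc meeting at most two of the six edges sees at most two of the three colours and hence misses one entirely, and a contiguous $(2,2,2)$ split always exists because between consecutive distinguished edges there is at least one vertex of $C_n$ available as a cut point and the three cut points land in pairwise disjoint vertex ranges. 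Second, your counting should note that when only $\le 2$ colours are repeated the bound $n\le t+2$ follows directly, and that the degenerate small cases $n=4,5$ (where three doubled colours cannot occur) are already settled by (a) alone. Neither is a gap, just an omission of routine detail. One stylistic remark: for $k=n=3$ you should exhibit a surjective $2$-colouring rather than appeal to ``any $2$-colouring'', since a non-surjective one fails.
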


\begin{thm}[\cite{Zhang}]\label{thm2}
If $G$ is a unicyclic graph of order $n\geq 3$ and girth $g\geq 3$, then
\begin{equation}
 rx_k(G)=
   \begin{cases}
      n-2, & \text{$k=3$ and $g\geq4$}; \\
     n-1, & \text{$g=3$ or $4\leq k\leq n$}.
    \end{cases}
\end{equation}
\end{thm}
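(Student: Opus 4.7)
The plan is to handle the two cases of the theorem separately. In both cases the upper bound is the easier direction: it is either Observation \ref{obs1} (when the claimed value is $n-1$) or an explicit construction that extends an optimal coloring of $C_g$ provided by Theorem \ref{thm1}. The lower bounds are the substantive part, and for these I would exploit two ingredients already in play, namely Observation \ref{obs2} (all bridges must receive distinct colors) and the structural fact that any Steiner tree joining three vertices of the unique cycle $C$ is forced to lie inside $C$, because every tree hanging off $C$ is acyclic and attached at a single vertex.

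For the case $k=3$ and $g\geq 4$, write $G$ as the cycle $C=v_1v_2\cdots v_gv_1$ together with rooted subtrees $T_1,\ldots,T_g$ (some possibly trivial). To achieve $rx_3(G)\leq n-2$, I would give the $n-g$ bridges the distinct colors $1,\ldots,n-g$ and then paint $C$ with $g-2$ fresh colors according to a 3-rainbow coloring of $C_g$ supplied by Theorem \ref{thm1}. Verifying that the result is a 3-rainbow coloring of $G$ reduces to showing that for any 3-set $S$, the natural candidate tree --- the union of the three bridge-paths from the vertices of $S$ up to their roots in $C$ together with an optimal rainbow Steiner tree on those roots inside $C$ --- is rainbow. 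This is clear because the bridge colors are pairwise distinct, the cycle sub-tree is rainbow by construction, and the two palettes are disjoint. For the lower bound $rx_3(G)\geq n-2$, restricting to 3-sets of cycle vertices and applying Theorem \ref{thm1} shows that the cycle must carry at least $g-2$ distinct colors; the delicate step is to prove that these colors can share nothing with the $n-g$ bridge colors, which I would handle by showing that any putative coincidence between the color of a bridge $e$ and the color of a cycle edge $f$ is falsified by a suitable 3-set consisting of a leaf reachable through $e$ together with two cycle vertices chosen so that no rainbow tree connecting them can avoid simultaneously using $e$ and a cycle edge of its color.

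For the case $g=3$ or $k\geq 4$, the upper bound $n-1$ is immediate from Observation \ref{obs1}. In the subcase $g=3$ with $k=3$, any coloring using fewer than $n-1$ colors must, after forcing distinct colors on the $n-3$ bridges, reuse a color either between two triangle edges or between a triangle edge and a bridge; in either case a 3-set comprising a leaf-vertex together with two suitable triangle vertices has only a small number of Steiner tree candidates, each of which can be shown to contain a monochromatic pair, producing the desired contradiction. For $k\geq 4$, the monotonicity $rx_k(G)\geq rx_4(G)$ reduces the problem to $k=4$, which I would treat by the same philosophy using a 4-set of leaves and/or cycle vertices: the Steiner trees for such a 4-set are rigid enough that any two independent color repetitions forbid rainbowness.

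The principal obstacle throughout is the lower bound in the first case: Observation \ref{obs2} alone yields only $rx_3(G)\geq n-g$, and restricting to $C$ alone yields only $rx_3(G)\geq g-2$, so one must add these two contributions without double-counting. Ruling out every possible color coincidence between a bridge and a cycle edge, by producing a concrete 3-set for each candidate coincidence, is the key technical step; thereafter the bookkeeping is routine, and the same philosophy, with 4-sets in place of 3-sets, carries over to the $k\geq 4$ subcase.
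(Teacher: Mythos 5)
This theorem is quoted from \cite{Zhang}; the present paper gives no proof of it, so there is no internal proof to compare against and I have assessed your proposal on its own terms. Your upper-bound constructions are fine, your observation that a Steiner tree for three cycle vertices can be pruned into the cycle is correct and useful, and the overall architecture of the lower bound ($n-g$ distinct bridge colors by Observation \ref{obs2}, at least $g-2$ cycle colors by Theorem \ref{thm1}, then add the two contributions) is the right one.

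The gap is precisely in the step you flag as the key one. You propose to prove that in \emph{any} $3$-rainbow coloring the bridge palette and the cycle palette are disjoint, by exhibiting, for each putative coincidence, a $3$-set that falsifies it. That statement is false as an unconditional structural fact: take $C_4=v_1v_2v_3v_4v_1$ with a pendant $p$ at $v_1$ and set $c(pv_1)=c(v_1v_2)=1$, $c(v_2v_3)=2$, $c(v_3v_4)=3$, $c(v_4v_1)=4$; the path $pv_1v_4v_3v_2$ is a rainbow spanning tree, so this is a valid $3$-rainbow coloring in which a bridge shares a color with a cycle edge, and no choice of $3$-set can kill that coincidence. What is actually needed is a contradiction argument: assume at most $n-3$ colors are used, deduce by inclusion--exclusion that some bridge color $\alpha$ must occur on a cycle edge $f=xy$ \emph{and} that the cycle then carries only about $g-2$ colors, and only then choose $S=\{u,x,y\}$ with $u$ beyond that bridge, so that the cycle part of any $S$-tree either uses $f$ (clashing with the bridge) or uses all $g-1$ edges of $C-f$ (too many edges for the available cycle colors). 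Even this requires extra care when more than one bridge color leaks onto the cycle, since the color count on the cycle then loosens. Your remaining cases are similarly underdeveloped: for $g=3$ the ``small number of Steiner tree candidates'' must actually be enumerated, and for $k\geq 4$ the plan of restricting to $k$-subsets of the cycle collapses entirely whenever $k>g$, so a different device is needed there.
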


The following observation is easy to verify.
\begin{obs}\label{obs3}
Let $G$ be a connected graph and $H$ be a connected spanning subgraph of $G$. Then $rx_3(G)\leq rx_3(H)$.
\end{obs}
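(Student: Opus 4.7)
The plan is to lift an optimal 3-rainbow coloring of $H$ to $G$ by recoloring only the ``extra'' edges with colors that have already been used. Concretely, I would fix an optimal 3-rainbow edge-coloring $c_H:E(H)\rightarrow\{1,2,\ldots,rx_3(H)\}$ and then define an edge-coloring $c:E(G)\rightarrow\{1,2,\ldots,rx_3(H)\}$ by
\[
c(e)=\begin{cases} c_H(e), & e\in E(H),\\ 1, & e\in E(G)\setminus E(H).\end{cases}
\]
This uses at most $rx_3(H)$ colors on $G$, so once I show that $c$ is a 3-rainbow coloring of $G$, the inequality $rx_3(G)\le rx_3(H)$ follows.

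To verify that $c$ is indeed a 3-rainbow coloring of $G$, I would take any 3-subset $S\subseteq V(G)$. Because $H$ is a spanning subgraph, $V(H)=V(G)$, so $S$ is also a 3-subset of $V(H)$. By the choice of $c_H$, there exists a rainbow $S$-tree $T$ in $H$ with respect to $c_H$. Since $E(T)\subseteq E(H)\subseteq E(G)$ and $c$ agrees with $c_H$ on $E(H)$, the tree $T$ sits inside $G$ and remains rainbow under $c$, giving the desired rainbow $S$-tree in $G$.

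There is no real obstacle here: the argument uses only the definition of $rx_3$ together with the two structural facts that $E(H)\subseteq E(G)$ and $V(H)=V(G)$. The one point worth flagging is that the extra edges in $E(G)\setminus E(H)$ must be colored with an \emph{already used} color (such as $1$) rather than with new colors, since the goal is to not exceed $rx_3(H)$; but this is harmless because those edges are never needed to build the rainbow $S$-tree.
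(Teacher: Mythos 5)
Your proof is correct and is exactly the standard argument the paper has in mind when it says the observation "is easy to verify": extend an optimal 3-rainbow coloring of $H$ to $G$ by giving the edges of $E(G)\setminus E(H)$ an already-used color, and note that every rainbow $S$-tree in $H$ is also a rainbow $S$-tree in $G$. Nothing further is needed.
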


In Section 2, we are going to determine the graphs whose $3$-rainbow
index equals the values $2$, $m$, $m-1$ or $m-2$, respectively. In
Section 3, we will determine the $3$-rainbow~index for the complete
bipartite graphs $K_{r,r}$ and complete $t$-partite graphs $K_r^t$
as well as the wheel $W_n$. Finally, we will give a sharp upper
bound of $rx_3(G)$ for $2$-connected graphs and 2-edge connected
graphs, and graphs whose $3$-rainbow~index attains the upper bound
are characterized.

\section{Graphs with $rx_3(G)\bf{=2, m, m-1, m-2}$}

From Lemma \ref{lem2}, if $rx_3(G)=2$, then the order $n$ of $G$
satisfies $3\leq n\leq 5$.

\begin{thm}\label{thm3}
Let $G$ be a connected graph of order $n$. Then $rx_3(G)=2$ if and
only if $G=K_5$ or $G$ is a 2-connected graph of order 4 or $G$ is
of order 3.
\end{thm}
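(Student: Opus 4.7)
The statement is an if-and-only-if, and I would prove each direction by cases on $n=|V(G)|$. For sufficiency, $rx_3(K_5)=2$ is immediate from Lemma~\ref{lem1}; when $n=3$ the graph is $P_3$ or $K_3$ and a direct two-coloring combined with the lower bound in Observation~\ref{obs1} gives $rx_3(G)=2$; when $n=4$ and $G$ is $2$-connected, I would note that each of $C_4$, $K_4-e$, $K_4$ contains a spanning $C_4$, so $rx_3(G)\le rx_3(C_4)=2$ by Observation~\ref{obs3} and Theorem~\ref{thm1}, with the matching lower bound coming from Observation~\ref{obs1}.

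For necessity, assume $rx_3(G)=2$; Lemma~\ref{lem2} forces $n\le 5$. The case $n=3$ is vacuous. For $n=4$, I would argue by elimination: every connected graph on four vertices that is not $2$-connected is either a tree (so $rx_3=3$ by Proposition~\ref{pro1}) or the paw graph (unicyclic with girth $3$, so $rx_3=3$ by Theorem~\ref{thm2}), both contradicting the assumption, so $G$ must be $2$-connected.

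The main work is the case $n=5$, where I must show $G=K_5$. My plan is to first extract a strong structural necessary condition: since any rainbow tree under a two-coloring has at most two edges, every $3$-subset $S$ must satisfy $d(S)=2$, so some vertex of $S$ is adjacent in $G$ to the other two. Equivalently, $\overline{G}$ contains no $P_3$, i.e., $\overline{G}$ is a matching. On five vertices this leaves only $G\in\{K_5,\,K_5-e,\,K_5-2K_2\}$, and the main obstacle is to rule out the last two. For $K_5-e$ with missing edge $uv$, each triple $\{u,v,w\}$ has a unique Steiner tree (the path through $w$), forcing $c(uw)\neq c(vw)$; pigeonholing the three values $c(uw)$ into two colors produces distinct $w_1,w_2$ with $c(uw_1)=c(uw_2)$ and hence $c(vw_1)=c(vw_2)$, after which the triangles $\{u,w_1,w_2\}$ and $\{v,w_1,w_2\}$ impose contradictory colors on the edge $w_1w_2$. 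For $K_5-2K_2$ with missing edges $u_1v_1$, $u_2v_2$ and fifth vertex $w$, the forced inequalities on the six edges between $\{u_1,v_1\}$ and $\{u_2,v_2\}$ pin down those colors up to a single swap, after which a short case analysis of the four triangles through $w$ produces a monochromatic triangle in every case. This finishes the characterization.
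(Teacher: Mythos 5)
Your proposal is correct, and its core --- the $n=5$ case --- takes a genuinely different route from the paper's. The paper first shows that a $2$-coloring admits no monochromatic triangle, then runs a maximum-degree analysis to force $\Delta(G)=4$, reduces to the single graph $K_5-e$ via Observation \ref{obs3} (any non-complete graph of order five is a spanning subgraph of some $K_5-e$), and finishes with a long chain of forced edge colors ending in a monochromatic triangle. You instead extract the structural condition up front: $rx_3(G)=2$ forces $sdiam_3(G)\le 2$, so every triple contains a vertex adjacent to the other two, i.e.\ $\overline{G}$ has no $P_3$ and is therefore a matching, leaving only $K_5$, $K_5-e$ and $K_5$ minus two independent edges. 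Your pigeonhole argument for $K_5-e$ is shorter and cleaner than the paper's coloring chase. Your third case is correct but actually redundant: since $K_5$ minus two independent edges is a spanning subgraph of $K_5-e$, Observation \ref{obs3} already yields $rx_3\ge rx_3(K_5-e)\ge 3$ for it. (A trivial slip: there are four, not six, edges between $\{u_1,v_1\}$ and $\{u_2,v_2\}$, but the forced alternating $2$-coloring of that $4$-cycle is exactly as you describe, and each of the two completions creates a monochromatic triangle through $w$.) You are also more explicit than the paper on the easy direction, verifying $rx_3(G)\le 2$ for the $2$-connected graphs of order $4$ via a spanning $C_4$ together with Theorem \ref{thm1}, and your elimination for $n=4$ (trees and the triangle-with-pendant both have $rx_3=3$ by Proposition \ref{pro1} and Theorem \ref{thm2}) is a valid alternative to the paper's cut-vertex/Steiner-distance argument. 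Both approaches are sound; yours trades the paper's degree analysis for a cleaner complement-structure reduction.
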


\begin{proof}
If $n=3$, it is easy to see that $rx_3(G)=2$.

If $n=4$, assume that $G$ is not 2-connected, then there is a cut
vertex $v$. It is easy to see that a tree connecting $V(G)\setminus
v$ has size 3, thus $rx_3(G)\geq 3$, a contradiction.

If $n=5$, let $V(G)=\{v_1,v_2,v_3,v_4,v_5\}$. Assume that
$rx_3(G)=2$ but $G$ is not $K_5$. Let $c: E(G)\rightarrow \{1,2\}$
be a rainbow coloring of $G$. Since every three vertices belong to a
rainbow path of length 2, there is no monochromatic triangle. Now we
show that the maximum degree $\Delta(G)$ is 4. If $\Delta(G)$ is 2,
then $G$ is a cycle or a path, and it is easy to check that
$rx_3(G)$ is 3 or 4, a contradiction. Assume that $\Delta(G)$ is 3.
Let $d(v_1)=3$ and $N(v_1)=\{v_2,v_3,v_4\}$. Then at least two edges
incident to $v_1$ have the same color, say $c(v_1v_2)=c(v_1v_3)=1$.
Consider $\{v_1,v_2,v_5\}$, $\{v_1,v_3,v_5\}$, this forces
$c(v_2v_5)=c(v_3v_5)=2$. Consider $\{v_1,v_2,v_3\}$, it implies that
$c(v_2v_3)=2$, but now $\{v_2,v_3,v_5\}$ forms a monochromatic
triangle, a contradiction. Thus $\Delta(G)=4$. Suppose $d(v_1)=4$.
If there are three edges incident to $v_1$ colored the same, say
$c(v_1v_2)=c(v_1v_3)=c(v_1v_4)=1$, then consider the three vertices
$v_2$, $v_3$ and $v_4$. Since these three vertices must belong to a
rainbow path of length $2$, without loss of generality, assume that
$c(v_2v_3)=1$ and $c(v_3v_4)=2$. However then $\{v_1,v_2,v_3\}$ is a
monochromatic triangle, which is impossible.  Therefore only two
edges incident to $v_1$ are assigned the same color. Since $G$ is
not $K_5$, $G$ is a spanning subgraph of $K_5-e$. Since $d(v_1)=4$,
we may assume that $G$ is a spanning subgraph of $K_5-v_3v_4$. Let
$G'=K_5-v_3v_4$. Consider $\{v_1,v_3,v_4\}$, $v_1v_3$ and $v_1v_4$
must have different colors, without loss of generality, assume that
$c(v_1v_3)=1$ and $c(v_1v_4)=2$. By symmetry, suppose $c(v_1v_2)=1$
and $c(v_1v_5)=2$. Then $c(v_2v_3)=2$, $c(v_4v_5)=1$. Consider
$\{v_2,v_3,v_4\}$, $\{v_3,v_4,v_5\}$, $\{v_2,v_3,v_5\}$, then
$c(v_2v_4)=1$, $c(v_3v_5)=2$, $c(v_2v_5)=1$, but now
$\{v_2,v_4,v_5\}$ forms a monochromatic triangle, which is
impossible. Hence, $rx_3(G)\geq rx_3(G')\geq 3$, contradicting to
the assumption.
\end{proof}

\begin{thm}\label{thm4}
(1) $rx_3(G)=m$ if and only if $G$ is a tree.

(2) $rx_3(G)=m-1$ if and only if $G$ is a unicyclic graph with girth 3.

(3) $rx_3(G)=m-2$ if and only if $G$ is a unicyclic graph with girth at least 4.
\end{thm}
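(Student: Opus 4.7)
The plan is to handle each equivalence using the parameter $m-n$ as the control, combining Observation~\ref{obs1}'s general bound $rx_3(G)\le n-1$ with the exact values from Proposition~\ref{pro1} (trees) and Theorem~\ref{thm2} (unicyclic graphs). Parts (1) and (2) are routine. For (1), a tree has $m=n-1$ so Proposition~\ref{pro1} gives $rx_3(G)=m$; if $G$ is not a tree then $m\ge n$ and Observation~\ref{obs1} forces $rx_3(G)\le n-1<m$. For (2), $(\Leftarrow)$ is Theorem~\ref{thm2}; for $(\Rightarrow)$, $rx_3(G)=m-1$ together with part (1) gives $m\ge n$, while Observation~\ref{obs1} gives $m-1\le n-1$, so $m=n$ and $G$ is unicyclic, and Theorem~\ref{thm2} then pins the girth at $3$ (girth $\ge 4$ would give $m-2$).

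For part (3), the $(\Leftarrow)$ direction is again Theorem~\ref{thm2}. For $(\Rightarrow)$, suppose $rx_3(G)=m-2$. By (1) and (2), $G$ is neither a tree nor a unicyclic graph of girth $3$; a unicyclic $G$ must then have girth $\ge 4$, as required. Otherwise $m\ge n+1$, and I aim for a contradiction. The case $m\ge n+2$ is immediate from Observation~\ref{obs1}: $rx_3(G)\le n-1\le m-3<m-2$. The delicate case is $m=n+1$, where the target is $rx_3(G)\le n-2$; this contradicts $rx_3(G)=m-2=n-1$ and closes the argument.

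For $m=n+1$ I split on the girth of $G$. If $G$ has a cycle $C$ of length $\ge 4$, I pick any $e\in E(C)$, extend the path $C-e$ to a spanning tree $T$ of $G$, and observe that $T+e$ is a spanning unicyclic subgraph of girth $|C|\ge 4$; Observation~\ref{obs3} and Theorem~\ref{thm2} then give $rx_3(G)\le rx_3(T+e)=n-2$. Otherwise every cycle of $G$ is a triangle, and cyclomatic number $2$ forces $G$ to consist of exactly two triangles sharing at most one vertex (sharing an edge would produce a $4$-cycle) with the remaining $m-6$ edges forming tree scaffolding hanging off triangle vertices.

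In this last sub-case I would exhibit an explicit $3$-rainbow coloring with exactly $3+(m-6)=n-2$ colors: rainbow-color each of the two triangles from the common palette $\{1,2,3\}$, ``twisted'' as in the bowtie (say $(1,2,3)$ on $T_1$ and $(2,1,3)$ on $T_2$) so that any Steiner tree running between the two triangles picks up three distinct colors; and assign each of the $m-6$ bridge edges its own fresh color. The main obstacle is precisely this explicit construction: the spanning-unicyclic trick fails here because every spanning unicyclic subgraph of $G$ inherits girth $3$, so the coloring must be exhibited by hand and verified over every $3$-subset, split by how the subset distributes among the two triangles and the tree attachments. The structural restrictions keep this verification finite, but it is the one step that cannot be outsourced to the earlier-cited results.
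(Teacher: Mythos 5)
Your proof is correct, and for parts (1), (2) and the counting skeleton of part (3) it coincides with the paper's argument (the bound $rx_3(G)\le n-1$ from Observation~\ref{obs1} against the exact values in Proposition~\ref{pro1} and Theorem~\ref{thm2}). Where you genuinely diverge is the case $m=n+1$ in part (3). The paper splits on two edge-disjoint cycles $C_1,C_2$ with $g_1\ge g_2$, exhibits an explicit $(m-3)$-coloring when $g_1\ge 4$ and another when $g_1=g_2=3$, and then dismisses the non-edge-disjoint case with a one-line assertion. You instead split on whether $G$ contains a cycle of length at least $4$: if so, the spanning unicyclic subgraph $T+e$ of girth $|C|\ge 4$ together with Observation~\ref{obs3} and Theorem~\ref{thm2} gives $rx_3(G)\le n-2=m-3$ with no new coloring to verify; if not, the cycle-space argument forces exactly two edge-disjoint triangles (two triangles sharing an edge would create a $4$-cycle), which silently absorbs the paper's unproved ``not edge-disjoint'' case. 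This is a cleaner organization that buys rigor in two places where the paper waves its hands. The one step you leave open --- verifying the $3$-coloring of the two triangles plus fresh colors on the bridges --- is also left unverified in the paper, but it does go through: a Steiner tree for three terminals can be forced to use two edges in at most one of the triangles (only three terminals are available to ``require'' the four non-connector triangle vertices), so one always has the freedom to pick the single edge in one triangle first and then the complementary two-edge path in the other, making the three triangle-edge colors exactly $\{1,2,3\}$; in particular the ``twist'' between the two triangles is not actually needed.
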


\begin{proof}

(1). By Proposition \ref{pro1}, if $G$ is a tree, then
$rx_3(G)=n-1=m$. Conversely, if $rx_3(G)=m$ but $G$ is not a tree,
then $m\geq n$. By Observation \ref{obs1}, $rx_3(G)\leq n-1\leq
m-1$, a contradiction.

(2). If $G$ is a unicyclic graph with girth $3$, by Theorem
\ref{thm2}, $rx_3(G)=n-1=m-1$. Conversely, if $rx_3(G)=m-1$, then by
(1), $G$ must contain cycles. If $G$ contains at least two cycles,
then $m\geq n+1$. By Observation \ref{obs1}, $rx_3(G)\leq n-1\leq
m-2$, a contradiction. Thus, $G$ contains exactly one cycle. If the
cycle of $G$ is of length at least 4, then by Theorem \ref{thm2},
$rx_3(G)=n-2=m-2$, a contradiction. Thus, the cycle of $G$ is of
length 3, the result holds.

(3). If $G$ is a unicyclic graph with girth at least $4$, by Theorem
\ref{thm2}, $rx_3(G)=n-2=m-2$. Conversely, if $rx_3(G)=m-2$ and
$m\geq n+2$, then by Observation \ref{obs1}, $rx_3(G)\leq n-1\leq
m-3$, a contradiction. Thus, $m\leq n+1$. If $m=n$, then $G$ is a
unicyclic graph. By Theorem \ref{thm2}, the girth of $G$ is at least
4. If $m=n+1$, and there are two edge-disjoint cycles $C_1$ and
$C_2$ of length $g_1$ and $g_2$ such that $g_1\geq g_2$, then if
$g_1\geq 4$, we assign $g_1-2$ colors to $C_1$, $g_2-1$ new colors
to $C_2$ and assign new distinct colors to all the remaining edges,
which make $G$ 3-rainbow connected, hence $rx_3(G)\leq m-3$, a
contradiction. Therefore $g_1=g_2=3$. In this case, we assign each
cycle with three colors 1, 2, 3, and assign new colors to all the
remaining edges, then $G$ is 3-rainbow connected, thus $rx_3(G)\leq
m-3$. If these two cycles are not edge-disjoint, we can also use
$m-3$ colors to make $G$ 3-rainbow connected, a contradiction.
\end{proof}

\section{The 3-rainbow index of some special graphs}

In this section, we determine the $3$-rainbow index of some special
graphs. First, we consider the regular complete bipartite graphs
$K_{r,r}$. It is easy to see that when $r=2$, $rx_3(K_{2,2})=2$ and,
logically, we can define $rx_3(K_{1,1})=0$.

\begin{thm}\label{thm5}
For integer $r$ with $r\geq 3$, $rx_3(K_{r,r})=3$.
\end{thm}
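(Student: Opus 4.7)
The lower bound $rx_3(K_{r,r})\ge 3$ is immediate from Lemma~\ref{lem2}, since $|V(K_{r,r})|=2r\ge 6$ for every $r\ge 3$. The substance of the proof is therefore to exhibit an explicit $3$-edge-colouring $c\colon E(K_{r,r})\to\{1,2,3\}$ under which every 3-subset $S\subseteq V(K_{r,r})$ admits a rainbow $S$-tree.

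Writing $A=\{a_1,\ldots,a_r\}$ and $B=\{b_1,\ldots,b_r\}$, every such $S$ is, up to swapping $A$ with $B$, of one of two types: (i) $S\subseteq A$, or (ii) $S=\{a_i,a_j,b_k\}$. Since only $3$ colours are available, a rainbow tree uses at most $3$ edges; in type (i) this forces the tree to be a $3$-edge rainbow star centred at some $b\in B$ whose three edges to $S$ realise all three colours (there is no other bipartite tree on three vertices of one side with at most $3$ edges). In type (ii), a rainbow $S$-tree is either the 2-edge path $a_i\,b_k\,a_j$, rainbow iff $c(a_ib_k)\ne c(a_jb_k)$, or a 3-edge path $b_k\,a_i\,b_l\,a_j$ or its mirror, which exists precisely when $c(a_ib_k)=c(a_jb_k)=\alpha$ and some $b_l\ne b_k$ has $\{c(a_ib_l),c(a_jb_l)\}=\{1,2,3\}\setminus\{\alpha\}$.

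I would encode the colouring as a matrix $M\in\{1,2,3\}^{r\times r}$ with $M[i][j]=c(a_ib_j)$. For $r=3$, take $M[i][j]\equiv i+j\pmod{3}$; this is a proper 3-edge-colouring of $K_{3,3}$, so every vertex sees all three colours on its incident edges, and both conditions above are immediate. For $r\ge 4$, I would first pin down the restriction of $M$ to three pivot rows (say $a_1,a_2,a_3$) and three pivot columns ($b_1,b_2,b_3$) as a Latin square, which handles all 3-subsets meeting the pivots, and then fill in the remaining entries by a structured (block or circulant) pattern chosen so that 3-subsets of generic rows and columns are also rainbow-covered and the mixed condition in (ii) is met for every pair of rows.

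The main obstacle is satisfying the star condition for type (i) and the mixed condition for type (ii) simultaneously for all $r\ge 4$. A purely circulant filling $M[i][j]=\phi((i-j)\bmod r)$ handles many 3-subsets but can fail: for instance at $r=6$, for any $\phi\colon\mathbb{Z}_6\to\{1,2,3\}$ an evenly-spaced triple such as $\{0,2,4\}$ tends to evade every cyclic shift unless $\phi$ is designed to break the natural $\mathbb{Z}_2$-symmetry. Consequently the filling must break the cyclic symmetry in a controlled way. It may be convenient to dispose of the small cases $r\in\{4,5,6\}$ by ad hoc matrices and use a uniform pattern for $r\ge 7$; once $M$ is specified, the verification is a routine case analysis over the values of $|S\cap A|\in\{0,1,2,3\}$, checking in each case that the rainbow tree of the shape identified above can be located.
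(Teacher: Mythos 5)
Your lower bound is fine (Lemma~\ref{lem2} applies since $2r\ge 6$; the paper instead notes that any tree connecting three vertices of one partite set has size at least $3$, which gives the same conclusion), and your reduction of the problem to the two conditions --- a rainbow $3$-edge star into every triple on one side, and for every $a_i,a_j,b_k$ either $c(a_ib_k)\ne c(a_jb_k)$ or a suitable rescuing column $b_l$ --- is a correct framing. But the proof stops exactly where the actual content begins: you never specify the colouring matrix $M$ for general $r$. What you offer is a plan (``pin down three pivot rows as a Latin square, fill in the rest by a structured pattern, dispose of $r\in\{4,5,6\}$ ad hoc''), together with an admission that the obvious circulant candidates fail. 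A theorem claiming $rx_3(K_{r,r})=3$ for all $r\ge 3$ cannot rest on an unspecified pattern whose existence is the whole point; as it stands the upper bound is unproved for every $r\ge 4$.

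The gap is easy to close, and the difficulty you anticipate is illusory because the right colouring is not circulant. Set $c(u_iw_j)=1$ if $i=j$, $c(u_iw_j)=2$ if $i<j$, and $c(u_iw_j)=3$ if $i>j$; this single formula works uniformly for all $r\ge 3$ with no small-case exceptions. For a triple $\{u_i,u_j,u_k\}$ with $i<j<k$ the star centred at $w_j$ has colours $2,1,3$ on its three edges, so your type (i) condition holds. For $S=\{u_i,u_j,w_k\}$ with $i<j$: if $i\le k\le j$ the two edges $u_iw_k,u_jw_k$ already get distinct colours; if $k<i<j$ the path $w_k u_i w_j u_j$ has colours $3,2,1$; if $i<j<k$ the path $w_k u_j w_i u_i$ has colours $2,3,1$. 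That is the entire verification. I would also tighten one small point in your structural discussion: in type (ii) the auxiliary $3$-edge tree can be the path $b_ka_ib_la_j$ \emph{or} $b_ka_jb_la_i$, and these impose slightly different conditions on the column $b_l$; your ``precisely when'' statement merges them. With the explicit matrix above this subtlety never arises, since one of the three listed trees is always rainbow.
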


\begin{proof}
Let $U$ and $W$ be the partite sets of $K_{r,r}$, where $|U|=|W|=r$.
Suppose that $U=\{u_1,\cdots,u_r\}$, $W=\{w_1,\cdots,w_r\}$. If
$S\subseteq U$ and $|S|=3$, then every $S$-tree has size at least 3,
hence $rx_3(K_{r,r})\geq 3$.

Next we show that $rx_3(K_{r,r})\leq 3$.  We define a coloring $c$:
$E(K_{r,r})\rightarrow \{1,2,3\}$ as follows.
 \begin{equation}
c(u_iw_j)=
   \begin{cases}
      1, & 1\leq i=j\leq r; \\
     2, & 1\leq i< j\leq r; \\
    3, & 1\leq j< i\leq r.
    \end{cases}
\end{equation}

Now we show that $c$ is a $3$-rainbow coloring of $K_{r,r}$. Let $S$
be a set of three vertices of $K_{r,r}$. We consider two cases.

{\bf Case 1.}~~The vertices of $S$ belong to the same partition of
$K_{r,r}$. Without loss of generality, let $S=\{u_i,u_j,u_k\}$,
where $i< j< k$. Then $T=\{u_iw_j,u_jw_j,u_kw_j\}$ is a rainbow
$S$-tree.

{\bf Case 2.}~~The vertices of $S$ belong to different partitions of
$K_{r,r}$. Without loss of generality, let $S=\{u_i,u_j,w_k\}$,
where $i<j$.

{\bf Subcase 2.1.}~~$k<i<j$. Then $T=\{u_iw_k,u_iw_j,u_jw_j\}$ is a rainbow $S$-tree.

{\bf Subcase 2.2.}~~$i\leq k\leq j$. Then $T=\{u_iw_k,u_jw_k\}$ is a rainbow $S$-tree.

{\bf Subcase 2.3.}~~$i<j<k$. Then $T=\{u_iw_i,u_jw_i,u_jw_k\}$ is a rainbow $S$-tree.
\end{proof}

With the aid of Theorem \ref{thm5}, we are now able to determine the
3-rainbow index of complete $t$-partite graph $K_r^t$. Note that we
always have $t\geq 3$. When $r=1$, $rx_3(K_1^t)=rx_3(K_t)$, which
was given in \cite{Zhang}.

\begin{thm}\label{thm6}
Let $K_r^t$ be a complete $t$-partite graph, where $r\geq 2$ and
$t\geq 3$. Then $rx_3(K_r^t)=3$.
\end{thm}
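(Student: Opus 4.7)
The plan is to prove matching lower and upper bounds of $3$. The lower bound is immediate: since $r\ge 2$ and $t\ge 3$ we have $n=rt\ge 6$, so Lemma~\ref{lem2} gives $rx_3(K_r^t)\ge 3$.

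For the upper bound I would label the vertices $v_{i,j}$ with $i\in\{1,\dots,t\}$ indexing the part and $j\in\{1,\dots,r\}$ indexing the position inside the part, and use the direct generalization of the coloring from Theorem~\ref{thm5}: color an edge $v_{i,j}v_{i',j'}$ (written with $i<i'$) by $1$ if $j=j'$, by $2$ if $j<j'$, and by $3$ if $j>j'$. Restricted to any two parts $V_i\cup V_{i'}$ this coincides with the coloring of Theorem~\ref{thm5}, so a $3$-subset $S$ lying inside two parts immediately inherits a rainbow $S$-tree from that proof; the new work is to handle $3$-subsets that meet three different parts.

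I would then verify the coloring by case-checking how $S$ distributes. If $S\subseteq V_i$ has $j$-indices $a<b<c$, the star at $v_{i',b}$ for any $i'\ne i$ already shows colors $\{1,2,3\}$. If $S$ is split $2{+}1$ across $V_{i_1},V_{i_2}$ as $\{v_{i_1,j_1},v_{i_1,j_2},v_{i_2,j_3}\}$ with $j_1<j_2$, the $2$-edge path through $v_{i_2,j_3}$ is rainbow whenever $j_1\le j_3\le j_2$, and in the two remaining orderings ($j_3<j_1$ or $j_3>j_2$) I would substitute the $3$-edge path $v_{i_2,j_3}-v_{i_1,j_1}-v_{i_2,j_2}-v_{i_1,j_2}$ or $v_{i_1,j_1}-v_{i_2,j_1}-v_{i_1,j_2}-v_{i_2,j_3}$ respectively, each of which is easily checked to hit all three colors. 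If $S$ meets three parts $V_{i_1},V_{i_2},V_{i_3}$ (taken with $i_1<i_2<i_3$) at indices $j_1,j_2,j_3$, the induced triangle on $S$ is monochromatic only in the three subcases $j_1=j_2=j_3$, $j_1<j_2<j_3$, and $j_1>j_2>j_3$, while in every other configuration some $2$-edge sub-path of the triangle already uses two distinct colors. For the two strictly monotone subcases the Steiner path $v_{i_1,j_1}-v_{i_3,j_1}-v_{i_2,j_2}-v_{i_3,j_3}$ uses colors $\{1,2,3\}$; for the all-equal subcase ($j_1=j_2=j_3=j$) the Steiner path $v_{i_2,j}-v_{i_1,j}-v_{i_2,j'}-v_{i_3,j}$ does, for any alternate column $j'\ne j$ (which exists because $r\ge 2$).

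The main obstacle is this last subcase, where three vertices share a common column index across three different parts. The induced triangle is entirely color~$1$, and no common neighbor can rescue a star either---when $t=3$ no common neighbor of $S$ exists, and when $t\ge 4$ every common neighbor $v_{i_4,j'}$ of $S$ sends three identically colored edges to $S$---so the rainbow $S$-tree has to be routed as a length-$3$ path detouring through a vertex in a different column $j'$. This is exactly where the two hypotheses $r\ge 2$ (to supply the alternate column) and $t\ge 3$ (to create the three-part configuration) are used.
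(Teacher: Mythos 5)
Your proposal is correct and follows essentially the same route as the paper: the identical three-coloring by comparing within-part indices, a star into another part for a triple inside one part, the bipartite analysis of Theorem~\ref{thm5} for the $2{+}1$ split, and a length-$3$ detour through a second column for the monochromatic three-part configurations (your organization of Case~3 by which triangles are monochromatic is in fact a slightly cleaner justification than the paper's ``without loss of generality''). The only cosmetic difference is the lower bound, which you get from Lemma~\ref{lem2} while the paper notes that a triple inside one partite set has Steiner distance $3$; both are immediate.
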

\begin{proof}
Let $U_1,U_2,\cdots,U_t$ be the $t$ partite sets of $K_r^t$, where
$|U_i|=r$. Suppose that $U_i=\{u_{i1},\cdots,u_{ir}\}$. If
$S\subseteq U_i$ and $|S|=3$, then every $S$-tree has size at least
3, hence $rx_3(K_{r,r})\geq 3$.

Next we show that $rx_3(K_r^t)\leq 3$.  We define a coloring $c$:
$E(K_r^t)\rightarrow \{1,2,3\}$ as follows.
\begin{equation}
c(u_{ai}u_{bj})=
   \begin{cases}
      1, & 1\leq i=j\leq r; \\
     2, & 1\leq i< j\leq r; \\
    3, & 1\leq j< i\leq r,
    \end{cases}
\end{equation}
where $1\leq a<b\leq t$.

We now show that $c$ is a 3-rainbow coloring of $K_r^t$. Let $S$ be
a set of three vertices of $K_r^t$.

{\bf Case 1.}~~The vertices of $S$ belong to the same partition.
Without loss of generality, let $S=\{u_{a1},u_{a2},u_{a3}\}$. Then
$T=\{u_{a1}u_{b2},u_{a2}u_{b2},u_{a3}u_{b2}\}$ is a rainbow
$S$-tree.

{\bf Case 2.}~~Two vertices of $S$ belong to the same partition.
Without loss of generality, let $S=\{u_{ai},u_{aj},u_{bk}\}$. If
$k<i<j$, then $T=\{u_{ai}u_{bk},u_{ai}u_{bj},u_{aj}u_{bj}\}$ is a
rainbow $S$-tree. If $i\leq k\leq j$, then
$T=\{u_{ai}u_{bk},u_{aj}u_{bk}\}$ is a rainbow $S$-tree. If $i<j<k$,
then $T=\{u_{ai}u_{bi},u_{aj}u_{bi},u_{aj}u_{bk}\}$ is a rainbow
$S$-tree.

{\bf Case 3.}~~Each vertex of $S$ belongs to distinct partite set.
Let $S=\{u_{ai},u_{bj},u_{ck}\}$, $a< b< c$.

{\bf Subcase 3.1}~~$i=j=k$. Without loss of generality, let
$S=\{u_{a1},u_{b1},u_{c1}\}$, then
$T=\{u_{a1}u_{b1},u_{a1}u_{b2},u_{b2}u_{c1}\}$ is a rainbow
$S$-tree.

{\bf Subcase 3.2}~~$i=j\neq k$. Without loss of generality, let
$S=\{u_{a1},u_{b1},u_{c2}\}$, then $T=\{u_{a1}u_{b1},u_{b1}u_{c2}\}$
is a rainbow $S$-tree.

{\bf Subcase 3.3}~~$i\neq j\neq k$. Without loss of generality, let
$S=\{u_{a1},u_{b2},u_{c3}\}$, then
$T=\{u_{a1}u_{c1},u_{c1}u_{b2},u_{b2}u_{c3}\}$ is a rainbow
$S$-tree.
\end{proof}

Another well-known class of graphs are the wheels. For $n\geq 3$,
the $wheel$ $W_n$ is a graph constructed by joining a vertex $v$ to
every vertex of a cycle $C_{n}:v_1,v_2,\cdots,v_{n},v_{n+1}=v_1$.
Given an edge-coloring $c$ of $W_n$, for two adjacent vertices $v_i$
and $v_{i+1}$, we define an edge-coloring of the graph by
identifying $v_i$ and $v_{i+1}$ to a new vertex $v'$ as follows: set
$c(vv')=c(vv_{i+1})$, $c(v_{i-1}v')=c(v_{i-1}v_i)$,
$c(v'v_{i+2})=c(v_{i+1}v_{i+2})$, and keep the coloring for the
remaining edges. We call this coloring the $identified$-$coloring$
at $v_i$ and $v_{i+1}$. Next we determine the $3$-rainbow index of
wheels.

\begin{thm}\label{thm7}
For $n\geq 3$, the $3$-rainbow index of the wheel $W_n$ is
\begin{equation}
 rx_3(W_n)=
   \begin{cases}
     2, & n=3; \\
     3, & 4\leq n\leq 6;\\
     4, & 7\leq n\leq 16;\\
     5, & n\geq 17.
    \end{cases}
\end{equation}
\end{thm}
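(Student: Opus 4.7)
My plan is to handle the four ranges of $n$ separately. The small cases follow quickly from earlier results, while the two threshold transitions at $n=7$ and $n=17$ are the heart of the proof.

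For $n=3$, $W_3=K_4$ is a $2$-connected graph of order $4$, so Theorem \ref{thm3} gives $rx_3(W_3)=2$. For $4\le n\le 6$, the lower bound $rx_3(W_n)\ge 3$ follows from Theorem \ref{thm3} when $n=4$ (since $W_4$ has order $5$ but is not $K_5$) and from Lemma \ref{lem2} when $n\in\{5,6\}$ (where the order is at least $6$). The matching upper bound is then obtained by exhibiting explicit $3$-colorings of $W_4$, $W_5$, and $W_6$ and verifying the rainbow-tree condition on each of the finitely many $3$-subsets.

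For $7\le n\le 16$, the upper bound $rx_3(W_n)\le 4$ is established by an explicit $4$-edge-coloring: I would distribute four colors on the hub edges in a nearly-periodic pattern and color the cycle edges so that every $3$-subset of vertices is connected either by a rainbow star through the hub or by a rainbow tree built from one hub edge plus a short rainbow cycle-arc. For the lower bound $rx_3(W_n)\ge 4$ when $n\ge 7$, I would argue by contradiction: a $3$-rainbow coloring of $W_n$ with only three colors would, by pigeonhole on the $n\ge 7$ hub edges, force a color class of size $\ge 3$ among the spokes; I then select three well-spaced cycle vertices in this class and show that no rainbow tree can connect them, whether the tree lies entirely on the cycle (too few color changes are available on only three colors) or passes through the hub (at most one of the three spokes may use the repeated hub-color, and the two detours exhaust the palette).

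For $n\ge 17$, the upper bound $rx_3(W_n)\le 5$ is again constructive, distributing five colors periodically along the cycle so that every $3$-subset admits a rainbow tree through the hub, with the identified-coloring operation handling the transition between different values of $n$. The main obstacle is the lower bound $rx_3(W_n)\ge 5$ for $n\ge 17$: assuming a $4$-rainbow coloring exists, pigeonhole on the $n\ge 17$ hub edges yields a hub-color class of size $\ge\lceil 17/4\rceil=5$, and the threshold $17$ is precisely the value at which one can select three vertices in that class whose pairwise cycle-distances are so large that no rainbow tree on four colors can connect them. The delicate case analysis, distinguishing the possible shapes of the putative rainbow $S$-tree by the role of the hub vertex $v$ (absent, a leaf, or an internal vertex of degree $2$ or $3$), and relying on the identified-coloring introduced just before the theorem to normalize the cyclic positions of the chosen vertices and reduce larger $n$ to a canonical representative, is where I expect the main technical difficulty to concentrate.
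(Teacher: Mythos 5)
Your overall strategy matches the paper's: pigeonhole on the spokes for the lower bounds at $n=7$ and $n=17$, explicit colorings combined with the identified-coloring operation for the upper bounds, and the small cases disposed of via Lemma \ref{lem1}, Lemma \ref{lem2} and Theorem \ref{thm3}. However, there are two genuine gaps. First, in both lower-bound arguments you propose to ``select three well-spaced cycle vertices \emph{in} this class,'' and that selection is not always possible: pigeonhole only guarantees a monochromatic spoke class $A$ with $|A|\geq 3$ (resp.\ $|A|\geq 5$), and $A$ may be an interval of consecutive cycle vertices. For $n=7$ with $A=\{v_1,v_2,v_3\}$, any three vertices of $A$ are joined by a rainbow path of two cycle edges, so your contradiction evaporates; for $n=17$ with $A=\{v_1,\dots,v_5\}$, the best spread is $\{v_1,v_3,v_5\}$, which is connected by the $4$-edge cycle path $v_1v_2v_3v_4v_5$ that a $4$-coloring can easily make rainbow. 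The argument that works takes only \emph{two} vertices of $A$ at cycle-distance at least $2$ (such a pair always exists) together with a third vertex \emph{outside} the closed neighbourhoods of both, so that the unique $S$-tree of size $3$ is the spoke star, which is not rainbow since two of its spokes share a color; the $n\geq 17$ case then still requires a further analysis of the one surviving path-shaped tree of size $4$ and an auxiliary vertex $v_0$ far from $A$.

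Second, the upper bounds for $7\leq n\leq 16$ and $n\geq 17$ rest entirely on colorings you never exhibit. The $4$-coloring of $W_{16}$ is the tight extremal case ($16=4^2$ is exactly where four colors stop sufficing), and ``a nearly-periodic pattern on the hub edges'' is not a construction: the paper has to display explicit colorings of $W_{16}$ and $W_{12}$ and verify them, then descend to the intermediate values by identified-colorings. Likewise the periodic $5$-coloring for $n\geq 17$ must be written down and checked. Without these constructions, half of every equality claimed in the statement remains unproved.
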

\begin{proof}
Suppose that $W_{n}$ consists of a cycle
$C_{n}:v_1,v_2,\cdots,v_{n},v_{n+1}=v_1$ and another vertex $v$
joined to every vertex of $C_{n}$.

Since $W_3=K_4$, it follows by Lemma \ref{lem1} that $rx_3(W_3)=2$.

If $n=6$, let $S=\{v_1,v_2,v_4\}$. Since every $S$-tree has size at
least 3, $rx_3(W_6)\geq 3$. Next we show that $rx_3(W_6)\leq 3$ by
providing a rainbow 3-coloring of $W_6$ as follows:

\begin{equation}
c(e)=
   \begin{cases}
      1, & \text{if~e$\in \{vv_1,vv_4,v_2v_3,v_5v_6\}$}; \\
     2, & \text{if~e$\in \{vv_2,vv_5,v_3v_4,v_1v_6\}$}; \\
    3, & \text{if~e$\in \{vv_3,vv_6,v_4v_5,v_1v_2\}$}
    \end{cases}
\end{equation}

If $n=5$, since $|W_5|=6$, by Lemma \ref{lem2}, $rx_3(W_5)\geq 3$.
Then we show that $rx_3(W_5)\leq 3$. We provide a rainbow 3-coloring
of $W_5$ obtained from the rainbow 3-coloring of $W_6$ by the
identified-coloring at $v_5$ and $v_6$.

If $n=4$, by Theorem \ref{thm3}, $rx_3(W_4)\geq 3$. Then we show
that $rx_3(W_4)\leq 3$. We provide a rainbow 3-coloring of $W_4$
obtained from the rainbow 3-coloring of $W_6$ by the
identified-coloring at $v_5$ and $v_6$, $v_4$ and $v_5$,
respectively.

{\bf Claim 1.}~~ If $7\leq n \leq 16$, $rx_3(W_n)=4$.

First we show that $rx_3(W_7)\geq 4$. Assume, to the contrary, that
$rx_3(W_7)\leq 3$. Let $c:E(W_7)\rightarrow \{1,2,3\}$ be a rainbow
3-coloring of $W_7$. Since $d(v)=7>2\times 3$, there exists
$A\subseteq V(C_n)$ such that $|A|=3$ and all edges in $\{uv: u\in
A\}$ are colored the same. Thus, there must exist at least two
vertices $v_i,v_j\in A$ such that $d_{C_7}(v_i,v_j)\geq 2$ and a
vertex $v_k\in C_7$ such that $v_k\notin
\{v_{i-1},v_{i+1},v_{j-1},v_{j-1}\}$. Let $S=\{v_i,v_j,v_k\}$. Note
that the only $S$-tree of size 3 is $T=vv_i\cup vv_j\cup vv_k$, but
$c(vv_i)=c(vv_j)$, it follows that there is no rainbow $S$-tree,
which is a contradiction. Similarly, we have $rx_3(W_n)\geq 4$ for
all $n \geq 8$.

Second, we show that $rx_3(W_{16})\leq 4$, which we establish by
defining a rainbow 4-coloring $c$ of $W_{16}$ shown in Figure 1. It
is easy to check that $c$ is a rainbow 4-coloring of $W_{16}$.
Therefore, $rx_3(W_{16})=4$.

\begin{figure}[h,t,b,p]
\begin{center}
\scalebox{1}[1]{\includegraphics{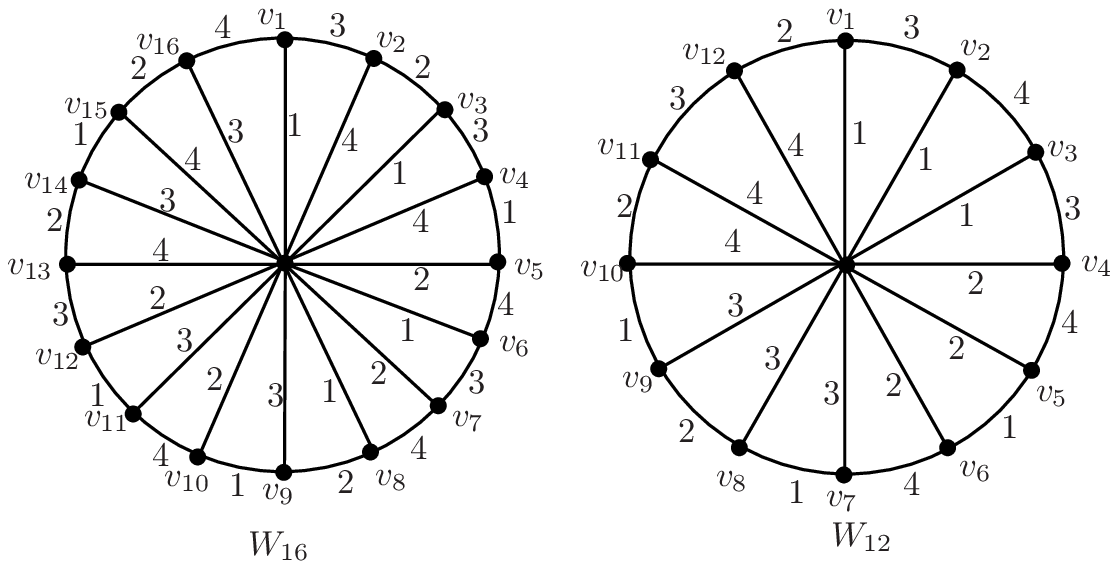}}\\[15pt]

Figure~1. 3-rainbow coloring of $W_{16}$ and $W_{12}$
\end{center}
\end{figure}

When $13\leq n \leq 15$, we provide a rainbow 4-coloring of
$W_{15}$, $W_{14}$, $W_{13}$ from the rainbow 4-coloring $c$ of
$W_{16}$ by consecutively using the identified-colorings at $v_{1}$
and $v_{16}$, $v_{12}$ and $v_{13}$, $v_{8}$ and $v_{9}$.

When $n=12$, we define a rainbow 4-coloring of $W_{12}$ shown in Figure 1.

When $7\leq n \leq 11$, we provide a rainbow 4-coloring of $W_{11}$,
$W_{10}$, $W_{9}$, $W_{8}$, $W_{7}$ from the rainbow 4-coloring $c$
of $W_{12}$ by consecutively identified-colorings at $v_{1}$ and
$v_{2}$, $v_{4}$ and $v_{5}$, $v_{7}$ and $v_{8}$, $v_{10}$ and
$v_{11}$, $v_{11}$ and $v_{12}$.

{\bf Claim 2.}~~ If $n\geq 17$, $rx_3(W_{n})=5$.

First we show that $rx_3(W_{17})\geq 5$. Assume, to the contrary,
that $rx_3(W_{17})\leq 4$. Let $c:E(W_{17})\rightarrow \{1,2,3,4\}$
be a rainbow 4-coloring of $W_{17}$. Since $d(v)= 17>4\times 4$,
there exists $A\subseteq V(C_n)$ such that $|A|=5$ and all edges in
$\{uv: u\in A\}$ are colored the same, say 1. Suppose that
$A=\{v_{i_1},v_{i_2},v_{i_3},v_{i_4},v_{i_5}\}$, where $i_1\leq
i_2\leq i_3\leq i_4\leq i_5$. There exists $k$ such that
$d_{C_{17}}(v_{i_k},v_{i_{k+1}})\geq 3$, where $1\leq k\leq 4$. Let
$S=\{v_{i_k},v_{i_{k+1}}, v_{i_{k+3}}\}$. Since
$d_{C_{17}}(v_{i_k},v_{i_{k+3}})\geq 2$ and
$d_{C_{17}}(v_{i_{k+1}},v_{i_{k+3}})\geq 2$, the only possible
$S$-tree is the path
$P=v_{i_{k+1}}v_{i_{k+2}}v_{i_{k+3}}v_{i_{k+4}}v_{i_{k+5}}$, where
addition is performed modulo 5. Thus color 1 must appear in $P$ and
every edge of the path must have a distinct color. By symmetry, we
consider two cases. If $c(v_{i_{k+1}}v_{i_{k+2}})=1$. Suppose
$c(v_{i_{k+2}}v_{i_{k+3}})=2$, $c(v_{i_{k+3}}v_{i_{k+4}})=3$. There
exists a vertex $v_0$, where $c(vv_0)=2$ or 3, such that
$d(v_0,A)\geq 3$. It is easy to see that there is no rainbow
$\{v_0,v_{i_{k+2}},v_{i_{k+4}}\}$-tree.  If
$c(v_{i_{k+2}}v_{i_{k+3}})=1$. We can also find such a vertex $v_0$
such that there exists no $\{v_0,v_{i_{k+2}},v_{i_{k+3}}\}$-tree,
which is a contradiction.

To show that $rx_3(W_{n})\leq 5$ for $n \geq 17$, define a rainbow
5-coloring of $W_n$ as follows:
\begin{equation}
 c(e)=
   \begin{cases}
     j, & \text{$e=vv_i$ and $i\equiv j \ (mod~ 5)$, $1\leq j\leq 5$}; \\
     i+3, & \text{$e=v_iv_{i+1}$}.
    \end{cases}
\end{equation}
It is easy to see that $c$ is a 5-rainbow coloring of $W_n$.
Therefore, $rx_3(W_n)=5$ for $n\geq 17$.
\end{proof}

\section{ The 3-rainbow index of 2-connected and 2-edge-connected graphs}

In this section, we give a sharp upper bound of the 3-rainbow index
for 2-connected and 2-edge-connected graphs. We start with some
lemmas that will be used in the sequel.

\begin{lem}\label{lem3}
If $G$ is a connected graph and $\{H_1, H_2,\cdots, H_k\}$ is a
partition of $V(G)$ into connected subgraphs, then $rx_3(G)\leq
k-1+\sum_{i=1}^krx_3(H_i)$.
\end{lem}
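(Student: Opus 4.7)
The plan is to describe an explicit edge-coloring of $G$ that uses exactly $k-1+\sum_{i=1}^{k}rx_3(H_i)$ colors and then verify that it is a $3$-rainbow coloring. First, for each $i$, I would fix an optimal $3$-rainbow coloring $c_i$ of $H_i$ using $rx_3(H_i)$ colors, with the palettes for different indices chosen to be pairwise disjoint. Next, I would form the quotient graph $G^*$ obtained by contracting each $V(H_i)$ to a single point $h_i$; since $G$ is connected, so is $G^*$, so $G^*$ has a spanning tree $T^*$ with exactly $k-1$ edges. Each edge of $T^*$ lifts to a concrete edge of $G$ whose endpoints lie in two different $H_i$'s, and I would color these $k-1$ lifted edges with $k-1$ fresh distinct colors, disjoint from every palette of every $c_i$. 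All remaining inter-component edges of $G$ can be colored arbitrarily with colors already used. This uses the desired number of colors.

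To verify the $3$-rainbow property, fix $S=\{x_1,x_2,x_3\}\subseteq V(G)$ with $x_j\in V(H_{i_j})$. If $i_1=i_2=i_3$, the rainbow $S$-tree supplied by $c_{i_1}$ inside $H_{i_1}$ is already a rainbow $S$-tree in $G$. Otherwise, I would look at the minimum subtree $T^*_S$ of $T^*$ containing $\{h_{i_1},h_{i_2},h_{i_3}\}$; because $T^*$ is a tree, $T^*_S$ is uniquely determined and its leaves must lie in $\{h_{i_1},h_{i_2},h_{i_3}\}$. I would then build a candidate $S$-tree by taking the union of the lifts of all edges of $T^*_S$ together with, inside each $H_m$ met by $T^*_S$, a subtree connecting the ``attachment points'' (the endpoints in $H_m$ of the chosen inter-component edges) to any $x_j \in V(H_m)\cap S$.

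The main obstacle is to check that inside each such $H_m$ the number of points to be connected is always at most $3$, so that the given $3$-rainbow coloring $c_m$ suffices. A leaf of $T^*_S$ must be a terminal and contributes $1$ attachment point together with either $1$ vertex of $S$, or (in the case where two of the $x_j$'s already share one $H_i$) $2$ vertices of $S$; an internal degree-$2$ vertex of $T^*_S$ contributes $2$ attachment points plus at most $1$ vertex of $S$; and a degree-$3$ internal vertex of $T^*_S$ must be non-terminal (otherwise $T^*_S$ would fail to be minimum), so it contributes exactly $3$ attachment points and no vertex of $S$. In every case the relevant point set has at most $3$ elements, and $c_m$ supplies a rainbow subtree of $H_m$ connecting them; when only $2$ points are required, I use the fact that $3$-rainbow connectivity implies $2$-rainbow connectivity, since a rainbow tree through any two vertices together with a third vertex contains a rainbow path between them.

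Finally, I would observe that the union of the chosen inter-component edges with the within-component subtrees is acyclic (its contraction is $T^*_S$, and each within-component piece is itself a tree meeting the inter-component edges only at attachment points) and rainbow (the $k-1$ inter-component colors are pairwise distinct and fresh, while each within-component piece uses a palette disjoint from all others and from the fresh colors). Hence it is a rainbow $S$-tree in $G$, which completes the verification and establishes the bound $rx_3(G)\leq k-1+\sum_{i=1}^{k}rx_3(H_i)$.
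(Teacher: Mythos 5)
Your proposal is correct and follows essentially the same route as the paper: contract each $H_i$ to a point, spend $k-1$ colors on the quotient (equivalently, on a lifted spanning tree of it), and give each $H_i$ its own disjoint palette of $rx_3(H_i)$ colors. The paper states this construction tersely and omits the verification; your accounting of attachment points (at most $3$ per part, using minimality of $T^*_S$) is exactly the detail needed to justify it.
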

\begin{proof}
Let $G'$ be a graph obtained from $G$ by contracting each $H_i$ to a
single vertex. Then $G'$ is a graph of order $k$, so $rx_3(G')\leq
k-1$. Given an edge-coloring of $G'$ with $k-1$ colors such that
$G'$ is 3-rainbow connected. Now go back to $G$, and color each edge
connecting vertices in distinct $H_i$ with the color of the
corresponding edge in $G'$. For each $i=1,2,\cdots,k$, we use
$rx_3(H_i)$ new colors to assign the edges of $H_i$ such that $H_i$
is 3-rainbow connected. The resulting edge-coloring makes $G$
3-rainbow connected. Therefore, $rx_3(G)\leq
k-1+\sum_{i=1}^krx_3(H_i)$.
\end{proof}

To $subdivide$ an edge $e$ is to delete $e$, add a new vertex $x$,
and join $x$ to the ends of $e$. Any graph derived from a graph $G$
by a sequence of edge subdivisions is called a $subdivision$ of $G$.
Given a rainbow coloring of $G$, if we subdivide an edge $e=uv$ of
$G$ by $xu$ and $xv$, then we assign $xu$ the same color as $e$ and
assign $xv$ a new color, which also make the subdivision of $G$
3-rainbow connected. Hence, the following lemma holds.

\begin{lem}\label{lem4}
Let $G$ be a connected graph, and $H$ be a subdivision of $G$. Then
$rx_3(H)\leq rx_3(G)+|H|-|G|$.
\end{lem}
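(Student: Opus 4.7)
The plan is to induct on the number $|H|-|G|$ of edge subdivisions that convert $G$ into $H$. The base case $|H|=|G|$ forces $H=G$ and is trivial, so the heart of the argument is a one-step bound: if $H$ is obtained from a connected graph $H'$ by subdividing a single edge $e=uv$ with a new vertex $x$, then $rx_3(H)\le rx_3(H')+1$. The construction is the one hinted at just before the lemma: start from an optimal $3$-rainbow coloring $c$ of $H'$, set $c(xu):=c(e)$, and assign the new edge $xv$ a fresh color $\alpha$ not used on $H'$. This uses at most $rx_3(H')+1$ colors, so everything reduces to checking that the extended coloring is in fact a $3$-rainbow coloring of $H$.

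For the verification I would split on whether the new vertex $x$ lies in the given $3$-set $S\subseteq V(H)$. If $x\notin S$, take a rainbow $S$-tree $T'$ of $H'$; it is either already an $S$-tree in $H$ (when $e\notin E(T')$) or becomes one after replacing $e$ by the length-two path $u$-$x$-$v$. In the latter case the rainbow property is preserved because $c(e)$ is removed and reinserted exactly once as $c(xu)$, while the added edge $xv$ carries the new color $\alpha$ that appears nowhere else in the tree.

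If $x\in S$ I would use the fresh color $\alpha$ as a ``free'' extension. Set $S'=(S\setminus\{x\})\cup\{v\}\subseteq V(H')$; either $|S'|=3$ and a rainbow $S'$-tree $T'$ is supplied by the $3$-rainbow coloring of $H'$ directly, or $|S'|=2$ because $v$ already belongs to $S\setminus\{x\}$, in which case a rainbow path between the two vertices of $S'$ (extracted from the rainbow tree of any $3$-set containing $S'$) serves as $T'$. In either scenario $T'\cup\{xv\}$ is a subgraph of $H$ connecting $S$, and it is rainbow because $\alpha$ is absent from $T'$; pruning if necessary yields the required rainbow $S$-tree.

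The main obstacle is the case $x\in S$: one has to guarantee that some rainbow subgraph of $H'$ through $v$ simultaneously reaches both elements of $S\setminus\{x\}$ so that attaching $xv$ captures all of $S$ at once. The fresh color on $xv$ is what makes this attachment cost nothing extra, while the reuse $c(xu)=c(e)$ is what prevents the bound from degrading to $+2$ per subdivision; summing over all subdivisions then delivers the claimed surplus of exactly $|H|-|G|$ colors.
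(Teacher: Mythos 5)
Your construction is exactly the one the paper uses (the paper in fact only states it in the sentence preceding the lemma and omits all verification): color $xu$ with $c(e)$, give $xv$ a fresh color, and iterate over the $|H|-|G|$ subdivisions. Your case analysis is essentially complete and correct; the only slip is in the branch $x\in S$, where you assert that $T'\cup\{xv\}$ is a subgraph of $H$ --- this fails if the rainbow $S'$-tree $T'$ of $H'$ happens to use the edge $e=uv$, which no longer exists in $H$. The fix is the same substitution you already perform in the $x\notin S$ branch: replace $e$ by the path $u$-$x$-$v$ (which keeps the tree rainbow, since $c(e)$ reappears only as $c(xu)$ and $xv$ carries the fresh color), and note that the new vertex $x$ then lies on the tree automatically, so no further edge needs to be attached. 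With that one-line patch the argument is complete and matches the paper's intended proof.
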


The $\Theta$-$graph$ is a graph consisting of three internally
disjoint paths with common end vertices and of lengths $a$, $b$, and
$c$, respectively, such that $a\leq b\leq c$. Then $a+b+c=n+1$.

\begin{lem}\label{lem5}
Let $G$ be a $\Theta$-graph of order n. If $n\geq 7$,  then $rx_3(G)\leq n-3$.
\end{lem}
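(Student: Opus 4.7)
My plan is to combine Lemma~\ref{lem4} (on subdivisions) with the verification of a short list of small base cases.

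\emph{Step 1 (a small benchmark).} First I would establish that $rx_3(\Theta(2,2,3))=3$. The lower bound follows from Lemma~\ref{lem2}, since this graph has $6$ vertices. For the upper bound, let $x,y$ be the interior vertices of the two length-$2$ paths and $z_1,z_2$ those of the length-$3$ path, and take
\[
c(ux)=1,\ c(xv)=2,\ c(uy)=2,\ c(yv)=1,\ c(uz_1)=3,\ c(z_1z_2)=1,\ c(z_2v)=3;
\]
a check over the $\binom{6}{3}=20$ three-subsets shows this is a $3$-rainbow coloring.

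\emph{Step 2 (the case $a\ge 2$).} When $n\ge 7$ and $a\ge 2$, one necessarily has $c\ge 3$ (otherwise $n=a+b+c-1\le 5$). Hence $G=\Theta(a,b,c)$ is a subdivision of $\Theta(2,2,3)$, and applying Lemma~\ref{lem4} gives
\[
rx_3(G)\;\le\;rx_3(\Theta(2,2,3))+|G|-6\;=\;3+(n-6)\;=\;n-3.
\]

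\emph{Step 3 (the case $a=1$).} Here $G$ is a cycle of length $n$ together with a chord $uv$. I would argue by induction on $n$. The base cases are the two $\Theta$-graphs of order $7$ with $a=1$, namely $\Theta(1,2,5)$ and $\Theta(1,3,4)$; for each I would construct an explicit $4$-edge-coloring and verify it against all $\binom{7}{3}=35$ three-subsets. For the induction step, let $G=\Theta(1,b,c)$ with $n=b+c\ge 8$; since $b\le c$ one has $c\ge 4$, and $G$ is a subdivision of a valid $\Theta$-graph $\Theta(1,b',c')$ of order $n-1\ge 7$ (take $(b',c')=(b,c-1)$ when $c>b$ and $(b',c')=(b-1,c)$ after re-sorting when $c=b$). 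Lemma~\ref{lem4} combined with the induction hypothesis then yields $rx_3(G)\le(n-4)+1=n-3$.

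\emph{Main obstacle.} The delicate step is the construction and verification of the two base cases with $a=1$. The chord $uv$ shortens the Steiner distance of many $3$-subsets, but it also strongly couples the colors of the edges incident to $u$ and $v$; to fit everything into $4$ colors one must interleave colors along the two arcs of the Hamilton cycle and on the chord so that, for every $3$-subset $S$, at least one of the few small Steiner $S$-trees is rainbow.
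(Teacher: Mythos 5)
Your overall strategy---reducing every $\Theta$-graph of order at least $7$ to a few small base graphs via Lemma~\ref{lem4}---is exactly the paper's, and the pieces you do carry out are correct: your $3$-coloring of $\Theta(2,2,3)$ in Step~1 is a valid $3$-rainbow coloring (I checked all $20$ triples), and the subdivision argument in Step~2 is sound, since $a\geq 2$ and $n\geq 7$ force $c\geq 3$. The induction in Step~3 is also logically correct, though superfluous: every $\Theta(1,b,c)$ with $b+c\geq 8$ is already a subdivision of $\Theta(1,2,5)$ (when $b=2$) or of $\Theta(1,3,4)$ (when $b\geq 3$), so a single application of Lemma~\ref{lem4} from the order-$7$ bases would do. Your case split ($a\geq 2$ versus $a=1$) differs slightly from the paper's ($b\geq 3$; $a=1,b=2$; $a=b=2$), but both organizations cover all $\Theta$-graphs with $n\geq 7$.

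The genuine gap is that the two colorings on which the entire $a=1$ branch rests are never produced. You correctly identify the $4$-colorings of $\Theta(1,2,5)$ and $\Theta(1,3,4)$ as ``the main obstacle'' and then leave it unresolved, so as written the proof is incomplete precisely at its hardest point. Both constructions exist and are not hard to supply. For $\Theta(1,2,5)$ the paper colors the chord with $4$, the length-$2$ path with $1,3$, and the length-$5$ path with $2,3,4,2,1$. For $\Theta(1,3,4)$ no fresh $35$-subset verification is needed at all: it is a subdivision of $\Theta(1,3,3)$, a graph of order $6$ that admits a $3$-rainbow coloring with $3$ colors (chord colored $3$, the two length-$3$ paths colored $2,3,1$ and $1,3,2$), whence Lemma~\ref{lem4} gives $rx_3(\Theta(1,3,4))\leq 3+1=4$. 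Adding these two facts closes the gap and makes your argument a complete, slightly reorganized version of the paper's proof.
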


\begin{proof}
Let the three internally disjoint paths be $P_1, P_2, P_3$ with the
common end vertices  $u$ and $v$, and the lengths of $P_1, P_2, P_3$
are $a,b,c$, respectively, where $a\leq b\leq c$.

(1). $b\geq 3$. Then $c\geq b\geq 3,\ a\geq 1$. First, we consider
the graph $\Theta_1$ with $a=1$, $b=3$ and $c=3$. We color $uP_1v$
with color 3, $uP_2v$ with colors $2,3,1$, and $uP_3v$ with colors
$1,3,2$. The resulting coloring makes $\Theta_1$ rainbow connected.
Thus, $rx_3(\Theta_1)\leq 3=|\Theta_1|-3$. For a general
$\Theta$-graph $G$ with $b\geq 3$ and $n\geq 7$, it is a subdivision
of $\Theta_1$, hence by Lemma \ref{lem4}, $rx_3(G)\leq
rx_3(\Theta_1)+|G|-|\Theta_1| \leq |G|-3$.

(2). $a=1$, $b=2$. Then since $a+b+c=n+1\geq 8$, $c\geq 5$. Consider
the graph $\Theta_2$ with $a=1$, $b=2$ and $c=5$. We rainbow color
$uP_1v$ with color 4, $uP_2v$ with colors $1,3$, and $uP_3v$ with
colors $2,3,4,2,1$. Thus, $rx_3(\Theta_2)\leq 4=|\Theta_2|-3$. For a
general $\Theta$-graph $G$ with $a=1$, $b=2$, $c\geq 5$, it is a
subdivision of $\Theta_2$, hence by Lemma \ref{lem4}, $rx_3(G)\leq
rx_3(\Theta_2)+|G|-|\Theta_2| \leq |G|-3$.

(3). $a=2$, $b=2$, Then since $a+b+c=n+1\geq 8$, $c\geq 4$. Consider
the graph $\Theta_3$ with $a=2$, $b=2$ and $c=3$. We rainbow color
$uP_1v$ with colors $3,2$, $uP_2v$ with colors $2,1$, and $uP_3v$
with colors $1,2,3$. Thus, $rx_3(\Theta_3)\leq 3=|\Theta_3|-3$. For
a general $\Theta$-graph $G$ with $a=2$, $b=2$, $c\geq 4$, it is a
subdivision of $\Theta_3$, hence by Lemma \ref{lem4}, $rx_3(G)\leq
rx_3(\Theta_3)+|G|-|\Theta_3| \leq |G|-3$.

Every $\Theta$-graph with $n\geq 7$ is one of the above cases,
therefore $rx_3(G)\leq n-3.$
\end{proof}

A 3-$sun$ is a graph $G$ which is defined from $C_6=v_1v_2\cdots
v_6v_1$ by adding three edges $v_2v_4$, $v_2v_6$ and $v_4v_6$.

\begin{lem}\label{lem6}
Let $G$ be a 2-connected graph of order 6. If $G$ is a spanning
subgraph of a 3-sun, then $rx_3(G)=4$. Otherwise, $rx_3(G)=3$.
\end{lem}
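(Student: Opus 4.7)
The bound $rx_3(G) \ge 3$ in both conclusions is immediate from Lemma \ref{lem2} since $n = 6$, so the substance of the proof is the matching upper bounds and the strengthening to $rx_3 \ge 4$ when $G$ is a spanning subgraph of a 3-sun.

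For the 3-sun case, I would prove $rx_3(G) \ge 4$ via a Steiner-distance argument on the three degree-$2$ vertices $v_1, v_3, v_5$ of the 3-sun. These vertices are pairwise non-adjacent, and each of $v_2, v_4, v_6$ is adjacent to exactly two of them, so their common neighborhood is empty. The only trees on four vertices are the star $K_{1,3}$ and the path $P_4$; a $K_{1,3}$ connecting $\{v_1, v_3, v_5\}$ would require a common neighbor, and a $P_4$ would force two of $\{v_1, v_3, v_5\}$ to be adjacent. Hence every Steiner $\{v_1, v_3, v_5\}$-tree uses at least four edges, giving $sdiam_3(\text{3-sun}) \ge 4$. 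The same triple still has Steiner distance $\ge 4$ in every 2-connected spanning subgraph of the 3-sun, since deleting chord edges creates no new adjacencies among $\{v_1, v_3, v_5\}$, so Observation \ref{obs1} yields $rx_3(G) \ge 4$. For the matching upper bound, the degree-$2$ vertices force every 2-connected spanning subgraph of the 3-sun to retain all six hexagon edges; hence $C_6$ is a connected spanning subgraph of $G$, and Observation \ref{obs3} together with Theorem \ref{thm1} give $rx_3(G) \le rx_3(C_6) = 4$.

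For the ``otherwise'' case, I need $rx_3(G) \le 3$ and would split on whether $G$ is Hamiltonian. If $G$ has a Hamilton cycle $v_1 v_2 \cdots v_6 v_1$, then $G$ carries at least one chord (else $G = C_6$, a 3-sun spanning subgraph); the failure of $G$ to embed in the 3-sun forces either a long chord $v_i v_{i+3}$ or short chords whose endpoints do not all lie in a single triangle $\{v_2, v_4, v_6\}$ up to cyclic relabeling. In the long-chord case $G$ contains a spanning $\Theta(1, 3, 3)$, and the 3-coloring extracted from the proof of Lemma \ref{lem5}(1), extended arbitrarily to the remaining chords, remains 3-rainbow. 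In the short-chord case each small configuration admits a direct 3-coloring; for instance, $C_6 + v_1 v_3 + v_2 v_4$ is handled by coloring the hexagon $v_1 v_2 \cdots v_6 v_1$ cyclically with pattern $1, 2, 3, 1, 2, 3$ and then setting $c(v_1 v_3) = 2$, $c(v_2 v_4) = 3$, which a direct check on all $\binom{6}{3} = 20$ triples confirms. If $G$ is non-Hamiltonian, then $G$ is not 3-connected: any 3-connected graph on six vertices has $\alpha \le 3$ (four pairwise non-adjacent vertices could only have neighbors among the remaining two, forcing degree $\le 2$), and Chv\'atal--Erd\H os then gives Hamiltonicity. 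Hence $G$ has a 2-vertex cut $\{u, v\}$, and I would enumerate the possibilities by the component-sizes $(1, 3)$ or $(2, 2)$ of $G - \{u, v\}$ together with their internal adjacencies, obtaining a short finite list of candidate graphs (including $K_{2, 4}$ and its 2-connected non-Hamiltonian supergraphs) and exhibiting a 3-rainbow coloring for each.

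The main obstacle is completing the enumeration in the ``otherwise'' case: each configuration is resolved by writing down a 3-coloring and verifying the rainbow property on the 20 triples, but making the list of 2-connected graphs on six vertices outside the 3-sun family exhaustive --- particularly in the non-Hamiltonian subcase, where several distinct graph types arise and not all are spanning supergraphs of $K_{2,4}$ --- requires careful book-keeping, which is where the bulk of the work lies.
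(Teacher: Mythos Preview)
Your approach mirrors the paper's closely: the same Steiner-distance argument on $\{v_1,v_3,v_5\}$ for the lower bound in the 3-sun case, the same appeal to $rx_3(C_6)=4$ for the matching upper bound, and a chord-type case analysis for Hamiltonian graphs outside the 3-sun family. The paper organizes the short-chord subcase slightly differently: it observes that one of the two non-equivalent configurations, $C_6+v_1v_3+v_2v_6$, already contains a spanning copy of $\Theta_3$ (the $\Theta$-graph with path lengths $2,2,3$ from Lemma~\ref{lem5}) and so is handled there, while only the other configuration $C_6+v_1v_5+v_2v_4$ receives an ad hoc 3-coloring. Your sample graph $C_6+v_1v_3+v_2v_4$ is in fact isomorphic to the first of these (reflect through the $v_2v_5$ axis), so it too could be dispatched via $\Theta_3$; the configuration that genuinely needs a hand-built coloring is the one you did not write out.

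Where you go beyond the paper is in treating the non-Hamiltonian case. The paper simply opens with ``$G$ is a graph with a cycle $C_6=v_1v_2\cdots v_6v_1$ and some additional edges,'' i.e.\ it tacitly assumes a Hamilton cycle; but $K_{2,4}$ is a 2-connected non-Hamiltonian graph on six vertices that is not a spanning subgraph of the 3-sun, so the paper's argument is incomplete precisely where you flag the remaining work. Your Chv\'atal--Erd\H{o}s reduction to a 2-cut enumeration is a sound way to close that gap.
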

\begin{proof}
Since $G$ is a 2-connected graph of order 6, $G$ is a graph with a
cycle $C_6=v_1v_2\cdots v_6v_1$ and some additional edges.

If $G$ is a subgraph of a 3-sun, then since every tree connecting
the three vertices $\{v_1,v_3,v_5\}$ must have size at least 4,
which implies that $rx_3(G)\geq 4$. On the other hand, $rx_3(G)\leq
rx_3(C_6)\leq 4$. Therefore, $rx_3(G)=4$.

If there is an edge between the two antipodal vertices of $C_6$,
then by Lemma \ref{lem5}, $rx_3(G)=3$.

If $G$ contains the edges $v_1v_3$ and $v_2v_6$, then it contains
$\Theta_3$, defined in Lemma \ref{lem5}, as a spanning subgraph,
thus $rx_3(G)=3$.

If $G$ contains the edges $v_1v_5$ and $v_2v_4$, we give a rainbow
3-coloring $c$ of $G$: $c(v_1v_2)=c(v_4v_5)=1$,
$c(v_2v_3)=c(v_2v_4)=c(v_1v_5)=c(v_5v_6)=2$,
$c(v_3v_4)=c(v_1v_6)=3$.
\end{proof}

Let $H$ be a subgraph of a graph $G$. An $ear$ of $H$ in $G$ is a
nontrivial path in $G$ whose ends are in $H$ but whose internal
vertices are not. A nested sequence of graphs is a sequence
$\{G_0,G_1,\cdots,G_k\}$ of graphs such that $G_i\subset G_{i+1}$,
$0\leq i< k$. An $ear~ decomposition$ of a 2-connected graph $G$ is
a nested sequence $\{G_0,G_1,\cdots,G_k\}$ of 2-connected subgraphs
of $G$ such that: (1) $G_0$ is a cycle; (2) $G_i=G_{i-1}\cup P_i$,
where $P_i$ is an ear of $G_{i-1}$ in $G$, $1\leq i\leq k$; (3)
$G_k=G$. We call an ear decomposition $nonincreasing$ if
$\ell(P_1)\geq \ell(P_2)\geq \cdots \geq \ell(P_k)$, where
$\ell(P_i)$ denotes the length of $P_i$.

\begin{thm}\label{thm6}
Let $G$ be a 2-connected graph of order $n \ (n\geq 4)$. Then
$rx_3(G)\leq n-2$, with equality if and only if $G=C_n$ or $G$ is a
spanning subgraph of 3-sun or $G$ is a spanning subgraph of $K_5-e$
or $G$ is a spanning subgraph of $K_4$.
\end{thm}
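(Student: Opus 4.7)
The plan is to prove the inequality $rx_3(G) \leq n-2$ via ear decomposition, then characterize the equality by splitting on $n$, with the backward direction reducing to previously established results. For the upper bound, I would induct along a nonincreasing ear decomposition $G_0 \subset G_1 \subset \cdots \subset G_k = G$, choosing $G_0$ to be a cycle of length at least $4$ (such a cycle exists in any 2-connected graph of order $n \geq 4$, e.g., by Menger applied to a pair of vertices at distance at least $2$, or directly when $G = K_n$). The base case gives $rx_3(G_0) = |V(G_0)| - 2$ by Theorem~\ref{thm1}. For the inductive step, writing $P_i = u_0 u_1 \cdots u_\ell$ with $u_0, u_\ell \in V(G_{i-1})$, I would reuse an arbitrary existing color on $u_0 u_1$ and give $\ell - 1$ fresh pairwise distinct colors to $u_1 u_2, \ldots, u_{\ell-1} u_\ell$. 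A case check on $|S \cap (V(P_i) \setminus V(G_{i-1}))|$ preserves 3-rainbow connectivity: subsets entirely in $V(G_{i-1})$ reuse the inductive tree, and subsets meeting the ear interior admit a rainbow $S$-tree by routing through the ear toward $u_\ell$---whose incident edge carries only fresh colors---and concatenating with an inductive rainbow tree in $G_{i-1}$. Telescoping yields $rx_3(G) \leq n - 2$.

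The backward direction of the equality characterization is immediate: $rx_3(C_n) = n-2$ by Theorem~\ref{thm1}; spanning subgraphs of the 3-sun achieve $rx_3 = 4 = n - 2$ by Lemma~\ref{lem6}; 2-connected spanning subgraphs of $K_5 - e$ are of order $5$ and distinct from $K_5$, so Theorem~\ref{thm3} gives $rx_3 \geq 3 = n - 2$, matching the upper bound; and 2-connected spanning subgraphs of $K_4$ have $rx_3 = 2 = n - 2$ by Theorem~\ref{thm3}. The forward direction for small orders is equally quick: $n = 4$ is vacuous since every 2-connected order-$4$ graph spans $K_4$; $n = 5$ leaves only $K_5$ outside the listed family, with $rx_3(K_5) = 2 < 3$ by Lemma~\ref{lem1}; and $n = 6$ is the content of Lemma~\ref{lem6}.

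The main obstacle is the forward direction for $n \geq 7$ with $G \neq C_n$, where I must improve the bound to $rx_3(G) \leq n - 3$. The cleanest subcase is when $G$ has a Hamilton cycle $C$: since $G \neq C_n$, the cycle $C$ plus any chord is a spanning $\Theta$-subgraph $H$, and Observation~\ref{obs3} together with Lemma~\ref{lem5} gives $rx_3(G) \leq rx_3(H) \leq n - 3$. More generally, if $G$ contains a $\Theta$-subgraph $H_1$ of order at least $7$, I would partition $V(G) = V(H_1) \cup V(H_2) \cup \cdots \cup V(H_t)$ with $H_2, \ldots, H_t$ the connected components of $G - V(H_1)$; Lemma~\ref{lem3}, combined with $rx_3(H_1) \leq |V(H_1)| - 3$ from Lemma~\ref{lem5} and the spanning-tree bound $rx_3(H_i) \leq |V(H_i)| - 1$ from Observation~\ref{obs3} and Proposition~\ref{pro1}, yields $rx_3(G) \leq n - 3$. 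The remaining cases---2-connected graphs whose largest $\Theta$-subgraph has order at most $6$, exemplified by $K_{2,5}$---force narrow structural constraints (few internally disjoint paths between vertex pairs, near-bipartite form) and must be handled by directly exhibiting a rainbow $(n-3)$-coloring. Verifying that every such graph admits such a coloring is the principal technical challenge.
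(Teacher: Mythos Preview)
Your upper-bound argument is correct and equivalent to the paper's (the paper phrases it via Lemma~\ref{lem3} with a longest cycle plus singleton parts rather than an explicit ear induction, but the content is the same), and your treatment of the backward direction and of the forward direction for $n \leq 6$ matches the paper. The gap is precisely where you flag it: the forward direction for $n \geq 7$ when $G \neq C_n$ and $G$ contains no $\Theta$-subgraph of order at least $7$. You acknowledge this as the ``principal technical challenge'' but offer no mechanism for it; note that this residual family is genuinely infinite (every $K_{2,m}$ with $m \geq 5$ belongs to it), so it cannot be disposed of by a sporadic check, and ``narrow structural constraints'' alone do not produce the coloring.

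The paper's device for closing this case is to begin the nonincreasing ear decomposition at a \emph{largest} cycle $C$ of $G$, not merely at some cycle of length $\geq 4$. Maximality of $C$ then pins down the attachment of the first ears. For instance, when $|C|=5$ and $\ell(P_1)=2$ the endpoints of $P_1$ are forced to be at distance $2$ on $C$ (else a longer cycle), so $H_1 = C \cup P_1$ is exactly the graph $\Theta_3$ of Lemma~\ref{lem5}, which already has $rx_3 \leq |H_1|-3$; Lemma~\ref{lem3} then gives $rx_3(G) \leq n-3$ for arbitrary $n$. The only subcase requiring a second step is $|C|=4$, $\ell(P_1)=\ell(P_2)=2$: maximality forces both $P_1$ and $P_2$ to attach at the same antipodal pair of $C$, so $H_1' = C \cup P_1 \cup P_2 \cong K_{2,4}$, and an explicit $3$-coloring of $K_{2,4}$ (the paper gives one) yields $rx_3(H_1') \leq |H_1'|-3$, whence $rx_3(G) \leq n-3$ by Lemma~\ref{lem3} again. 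Every remaining subcase forces $|V(G)| \leq 6$ via the nonincreasing condition and falls to Theorem~\ref{thm3} or Lemma~\ref{lem6}. The ``largest cycle'' choice is thus the missing idea that converts your open-ended residual class into a finite analysis terminating at $K_{2,4}$.
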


\begin{proof}
Since $G$ is 2-connected, G contains a cycle. Let $C$ be the largest
cycle of $G$, then $|C|\geq 4$, $rx_3(C)\leq |C|-2$. Let $H_1=C$,
$H_2, H_3,\cdots, H_{n-|C|+1}$ be subgraphs of $G$, each is a single
vertex, then by Lemma \ref{lem3}, $rx_3(G)\leq n-|C|+rx_3(H_1)\leq
n-2$.

If $G=C$, then by Theorem \ref{thm1}, $rx_3(G)=n-2.$

If $G\neq C$, then $G$ contains a nonincreasing ear decomposition
$\{G_0,G_1,\cdots,G_k\}$. Let $H_1=C\cup P_1$, then $H_1$ is a
$\Theta$-graph. We choose $H_2, H_3,\cdots, H_{n-|H_1|+1}$ as
subgraphs of $G$ with a single vertex each, then by Lemma
\ref{lem3}, $rx_3(G)\leq n-|H_1|+rx_3(H_1)$.

If $|H_1|\geq 7$, then by Lemma \ref{lem5}, $rx_3(H_1)\leq |H_1|-3$,
hence $rx_3(G)\leq n-3$.

If $|H_1|=6$, we consider three cases.

\textbf{Case 1.} $|C|=6$. Then $\ell(P_1)=1$. Hence
$\ell(P_1)=\ell(P_2)= \cdots = \ell(P_k)=1$, $G$ is a graph of order
6. By Lemma \ref{lem6}, $rx_3(G)=4$ if and only if $G$ is a spanning
subgraph of a 3-sun.

\textbf{Case 2.} $|C|=5$. Then $\ell(P_1)=2$. Let $u$ and $v$ be the
end vertices of $P_1$. If $d_C(u,v)=1$, then we can find a cycle
larger than $C$, contradicting the choice of $C$. Otherwise,
$d_C(u,v)=2$, it is the graph $\Theta_3$ defined in Lemma
\ref{lem5}, then $rx_3(H_1)=rx_3(\Theta_3)\leq 3=|H_1|-3$, thus
$rx_3(G)\leq n-3$.

\textbf{Case 3.} $|C|=4$. Then $\ell(P_1)=3$. Let $u$ and $v$ be the
end vertices of $P_1$. Either  $d_C(u,v)=1$ or $d_C(u,v)=2$, we can
always find a cycle larger than $C$, a contradiction.

If $|H_1|=5$, there are two cases to be considered. If $|C|=5$, then
$\ell(P_1)=1$, hence $G$ is a graph of order 5. By Theorem
\ref{thm3}, $rx_3(G)=3=n-2$ except for $K_5$, whose 3-rainbow index
is 2. If $|C|=4$, then $\ell(P_1)=2$. Let $u$ and $v$ be the end
vertices of $P_1$. Note that $d_C(u,v)=2$. If $\ell(P_2)=1$, then
$G$ is a graph of order 5. If $\ell(P_2)\geq 2$, let $u'$ and $v'$
be the end vertices of $P_2$, then $\{u',v'\}=\{u,v\}$, otherwise,
we can find a cycle larger than $C$. Let $H_1'=H_1\cup P_2$, then
$H_1'$ is a graph consisting of 4 internally disjoint paths of
length 2 with common vertices $u$ and $v$. We assign the edges of
the four paths with colors 12, 21, 31, 13, the resulting coloring
makes $H_1'$ rainbow connected, thus, $rx_3(H_1')\leq 3=|H_1'|-3$.
Let $H_2', H_3',\cdots, H'_{n-|H'_1|+1}$ be subgraphs of $G$, each
is a single vertex, then by Lemma \ref{lem3}, $rx_3(G)\leq
n-|H_1'|+rx_3(H_1')\leq n-3$.

If $|H_1|=4$, then $|C|=4$, $\ell(P_1)=1$, $G$ is a graph of order
4, by Theorem \ref{thm3}, $rx_3(G)=2=n-2$.

Therefore, $rx_3(G)=n-2$ if and only if $G=C_n$ or $G$ is a spanning
subgraph of 3-sun or $G$ is a spanning subgraph of $K_5-e$ or $G$ is
a spanning subgraph of $K_4$.
\end{proof}

Now we turn to 2-edge-connected graphs. We call an ear is $closed$
if its endvertices are identical, otherwise, it is $open$. An open
or closed ear is called a $handle$. For a 2-edge-connected graph
$G$, there is a handle-decomposition, that is a sequence
$\{G_0,G_1,\cdots,G_k\}$ of graphs such that: (1) $G_0$ is a cycle;
(2) $G_i=G_{i-1}\cup P_i$, where $P_i$ is a handle of $G_{i-1}$ in
$G$, $1\leq i\leq k$; (3) $G_k=G$. Similar to Theorem \ref{thm6}, we
give an upper bound of 2-edge-connected graphs.

\begin{figure}[h,t,b,p]
\begin{center}
\scalebox{1}[1]{\includegraphics{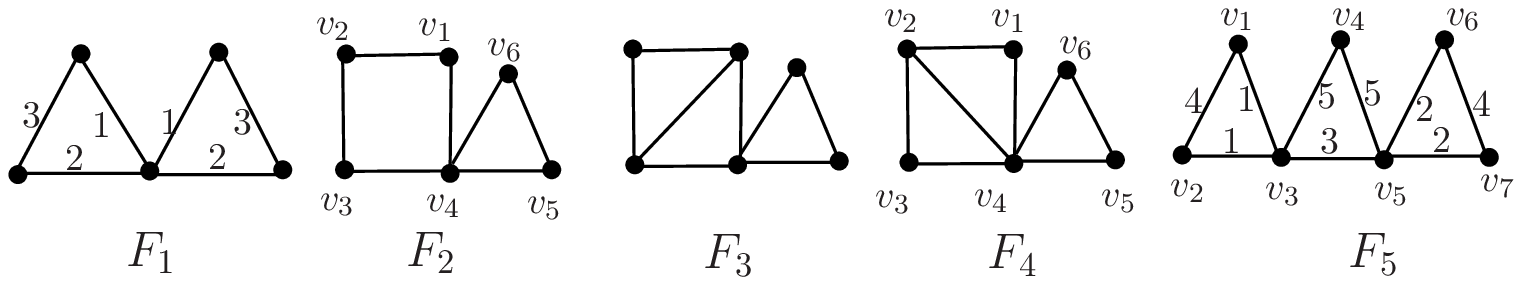}}\\[15pt]

Figure~2. Graphs with $rx_3(G)=n-2$
\end{center}
\end{figure}

\begin{figure}[h,t,b,p]
\begin{center}
\scalebox{1}[1]{\includegraphics{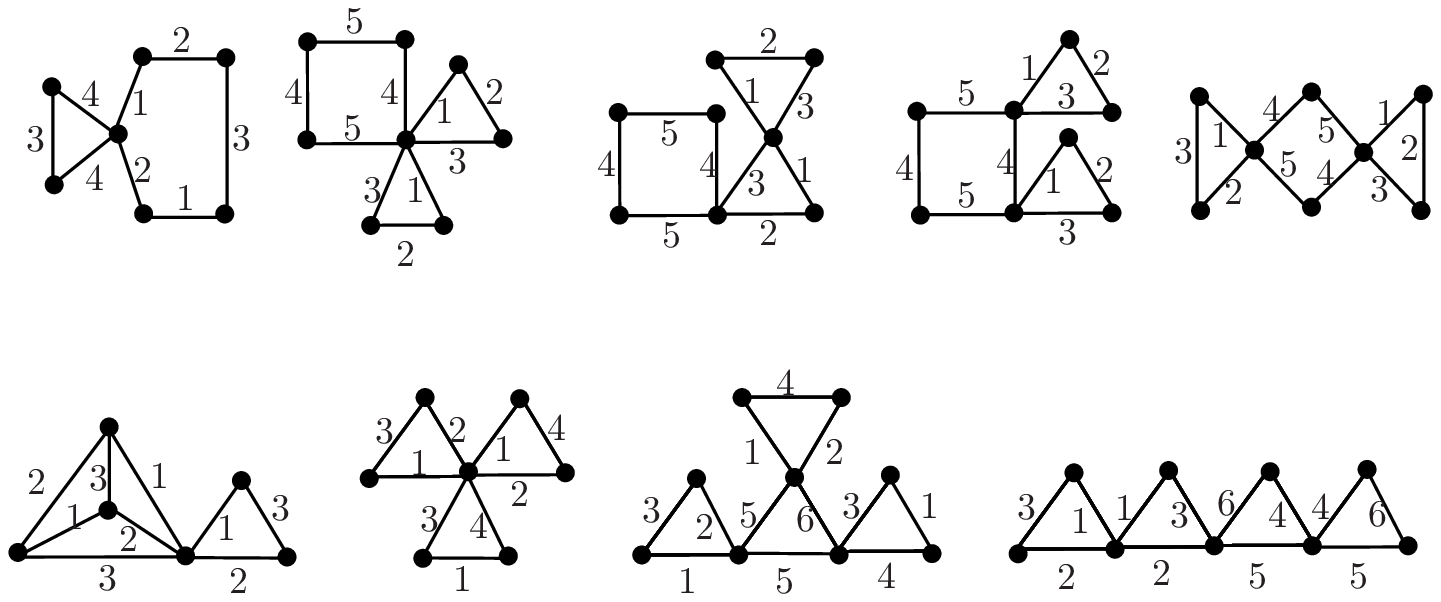}}\\[15pt]

Figure~3. Graphs with $rx_3(G)\leq n-3$
\end{center}
\end{figure}

\begin{thm}\label{thm8}
Let $G$ be a 2-edge-connected graph of order $n\geq 4$. Then
$rx_3(G)\leq n-2$, with equality if and only if $G$ is a graph
attaining the upper bound in Theorem \ref{thm6} or a graph in Figure
2.
\end{thm}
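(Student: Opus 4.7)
My plan is to closely parallel Theorem \ref{thm6}, replacing its ear decomposition by a \emph{nonincreasing handle decomposition} $\{G_0,G_1,\ldots,G_k\}$ of $G$, since in a 2-edge-connected graph handles may be either open or closed. For the upper bound I take $C$ to be a longest cycle of $G$ and apply Lemma \ref{lem3} with $H_1=C$ and singleton subgraphs for the vertices outside $C$: when $|C|\geq 4$, Theorem \ref{thm1} gives $rx_3(C)=|C|-2$, so $rx_3(G)\leq(n-|C|)+(|C|-2)=n-2$. The only case missed is when the longest cycle of $G$ is a triangle; such 2-edge-connected graphs of order $n\geq 4$ have a very restricted structure (essentially fusions of triangles at a single vertex or edge) and can be handled separately, either by a direct short explicit coloring or by absorbing them into the equality analysis below.

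For the equality case, I set $H_1=G_0\cup P_1$ and split on whether $P_1$ is an open or a closed handle. If $P_1$ is open, $H_1$ is a $\Theta$-graph and the entire case analysis of Theorem \ref{thm6} goes through unchanged: Lemma \ref{lem5} gives $rx_3(H_1)\leq|H_1|-3$ whenever $|H_1|\geq 7$, and the residual small $\Theta$-graphs lead via Lemma \ref{lem3} to exactly the equality graphs already listed in Theorem \ref{thm6}. If $P_1$ is closed, $H_1$ is a \emph{bouquet} $B_{a,b}$ of two cycles of lengths $a\leq b$ meeting at a single vertex, with $|H_1|=a+b-1$. The key new step is to establish a bouquet-analog of Lemma \ref{lem5}, namely that $rx_3(B_{a,b})\leq|B_{a,b}|-3$ outside an explicit small list of pairs $(a,b)$. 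These exceptional bouquets, together with a handful of small graphs produced by attaching further short handles $P_2,\ldots,P_k$ to them, are precisely the graphs of Figure 2; for each one, the lower bound $rx_3(G)\geq n-2$ is verified by exhibiting a 3-set $S$ whose minimum $S$-tree already has $n-2$ edges, forcing $n-2$ colors via Observation \ref{obs1}.

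The main obstacle is the bouquet-analog of Lemma \ref{lem5}. Unlike a $\Theta$-graph, where three internally disjoint paths share two common endpoints, a bouquet $B_{a,b}$ has only a single hub vertex $v$, so every rainbow tree joining vertices in different cycles is forced through $v$. This severely limits the color patterns available at the hub and tightens the constraints on what can serve as a rainbow $S$-tree. I plan to prove the bouquet-analog by first exhibiting explicit efficient rainbow colorings of a few base cases (for instance $B_{3,5}$, $B_{4,4}$, and perhaps $B_{3,6}$) and then lifting these to arbitrary bouquets via Lemma \ref{lem4} to cover edge subdivisions. Pinpointing the exact finite list of exceptional bouquets that comprise Figure 2, and confirming that further small handles attached to them cannot disturb the equality $rx_3(G)=n-2$ for exactly the claimed graphs, is the most delicate part of the argument.
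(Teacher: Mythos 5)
Your overall strategy is the same as the paper's: longest cycle plus Lemma \ref{lem3} for the bound $rx_3(G)\leq n-2$ (the paper handles the all-triangles case exactly as you suggest, by taking two triangles sharing a cut vertex and coloring that subgraph with $3=|F_1|-2$ colors); then a handle decomposition in which open handles reduce to Theorem \ref{thm6} and a closed first handle produces a bouquet of two cycles at a single vertex. The paper's ``bouquet analog'' of Lemma \ref{lem5} is also what you describe: if both cycles have length at least $4$ one colors each cycle with $|C_i|-2$ colors, and the small exceptions ($B_{3,3}=F_1$ and $B_{3,4}=F_2$, plus the graphs obtained from them by attaching further short closed handles) are exactly the graphs of Figure 2.

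There is, however, one concrete step in your plan that fails: the claim that for each exceptional graph the equality $rx_3(G)=n-2$ can be certified by exhibiting a $3$-set $S$ with $d(S)=n-2$ and invoking Observation \ref{obs1}. That works for $F_2$ (the paper does use $S=\{v_2,v_5,v_6\}$ there), but not for the other hard cases. For $F_4$ and, most strikingly, for the order-$7$ graph consisting of a chain of three triangles (Figure 2(5)), the Steiner $3$-diameter is strictly smaller than $n-2$ --- in the three-triangle chain every $3$-set is connected by a tree of size at most $4$, while $n-2=5$ --- so Observation \ref{obs1} only yields a lower bound that is too weak. The paper instead has to assume an $(n-3)$-coloring exists and derive a contradiction by tracking forced color coincidences over many $3$-sets; this ad hoc coloring argument is the real content of the equality characterization and is absent from your proposal. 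You would need to supply such arguments (or some substitute lower-bound technique beyond Steiner distance) for those graphs before the characterization is complete.
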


\begin{proof}

Let $C$ be the largest cycle of $G$. If $|C|\geq 4$, then
$rx_3(C)\leq |C|-2$. Otherwise, all cycles of $G$ are of length 3.
Since $n\geq 4$, there are at least two triangles $C_1$ and $C_2$
with a common vertex $v$. Let $F_1=C_1\cup C_2$, we rainbow color
$F_1$ with three colors, see Figure 2(1), thus $rx_3(F_1)\leq
3=|F_1|-2$. Let $H_1=C$ or $F_1$, $H_2, H_3,\cdots, H_{n-|H_1|+1}$
be subgraphs of $G$ with a single vertex each, then by Lemma
\ref{lem3}, $rx_3(G)\leq n-|H_1|+rx_3(H_1)\leq n-2$.

Now we determine the graphs that obtain the upper bound $n-2$.

If $G=C$, then by Theorem \ref{thm1}, $rx_3(G)=n-2.$

If $G\neq C$, then $G$ contains a handle-decomposition
$\{G_0,G_1,\cdots,G_k\}$. Let $H_1\subseteq G$, $H_2, H_3,\cdots,
H_{n-|H_1|+1}$ be subgraphs of $G$ with a single vertex each, then
by Lemma \ref{lem3}, if we show that $rx_3(H_1)\leq |H_1|-3$, then
we have $rx_3(G)\leq n-3$.

If $|C|\geq 4$ and $P_1$ is an open ear, we come back to Theorem
\ref{thm6}. If $|C|=3$ and $P_1$ is an open ear, then a cycle is of
length larger than $C$, a contradiction.

If $|C|\geq 4$ and $P_1$ is a closed ear, then $G_1$ is a union of
two cycles $C_1=C$ and $C_2=P_1$. If both of the cycles are of
length at least 4, we rainbow color each cycle $C_i$ with $|C_i|-2$
colors, which makes $G_1$ 3-rainbow connected. So we assume that
$C_2$ is of length 3. If $C_1$ is of length 5, we rainbow color
$G_1$ by 4 colors, see Figure 3(1). If $C_1$ is of length greater
than 5, then it is the subdivision of the graph in the case of
$|C_1|=5$. For all the above three cases, we have $rx_3(G_1)\leq
|G_1|-3$. Let $H_1=G_1$, it follows that $rx_3(G)\leq n-3$.

So it remains the case that $|C_1|=4,$ $|C_2|=3$, we denote this
graph by $F_2$, see Figure 2(2). Then $F_2$ is a subdivision of
$F_1$, so $rx_3(F_2)\leq4$. On the other hand, consider
$S=\{v_2,v_5, v_6\}$, every $S$-tree has size at least 4, hence
$rx_3(F_2)=4=|F_2|-2$. Observe that $P_2$ is a closed ear of length
at most 4, then $G_2=F_2\cup P_2$. If $\ell(P_2)=4$, then $G_2$
contains two cycles of length 4. If $\ell(P_2)=3$, we rainbow colors
$G_2$ with $|G_2|-3$ colors, see Figure 3(2-5). For the above two
cases, $rx_3(G_2)\leq |G_2|-3$. Let $H_1=G_2$, it implies that
$rx_3(G)\leq n-3$. If $\ell(P_2)=1$, then $P_2$ must be an edge
joining the vertices of $C_1$, there are two graphs, denoted by
$F_3$ and $F_4$. Similar to $F_2$, we have $rx_3(F_3)=|F_3|-2$. For
$F_4$, $rx_3(F_4)\leq rx_3(F_2)\leq4$. On the other hand, suppose
$rx_3(F_4)\leq3$. Consider $\{v_1,v_3, v_5\}$, $\{v_1,v_3, v_6\}$,
we have that $c(v_4v_6)= c(v_4v_5)$, which implies that there is no
rainbow $\{v_1,v_5,v_6\}$-tree or $\{v_1,v_5,v_6\}$-tree, a
contradiction. Hence $rx_3(F_4)=4=|F_4|-2$. Observe that $P_3$ is of
length 1, $G_3=F_3\cup P_3$ or $F_4\cup P_3$, we can rainbow color
$G_3$ by 3 colors, see Figure 3(6). Let $H_1=G_3$, then $rx_3(G)\leq
n-3$.

If $|C|=3$ and $P_1$ is a closed ear, then $\ell(P_1)=3$. Thus
$G_1=F_1$, it is easy to get $rx_3(G_1)= |G_1|-2$. If $P_2$ exists,
then it must be a closed ear of length 3, there are two cases for
the graph $G_2$. If $G_2$ is as Figure 3(7), then $rx_3(G_2)\leq
|G_2|-3$, let $H_1=G_2$, thus $rx_3(G)\leq n-3$. If $G_2$ is as
Figure 2(5), we prove that its 3-rainbow index is $|G_2|-2$. From
Figure 2(5), we have that $rx_3(G_2)\leq 5$. If $rx_3(G_2)\leq 4$,
let $c: E(G)\rightarrow \{1,2,3,4\}$ be the rainbow 4-coloring of
$G_2$. Consider $\{v_1,v_4,v_6\}$ and $\{v_1,v_4,v_7\}$, we have
$c(v_1v_3)\neq c(v_5v_6)$, $c(v_1v_3)\neq c(v_5v_7)$. If
$c(v_5v_6)=c(v_5v_7)$, suppose that $c(v_5v_6)=1$, $c(v_1v_3)=2$,
consider $\{v_1,v_6,v_7\}$, we may assume $c(v_3v_5)=3$,
$c(v_6v_7)=4$. Consider $\{v_2,v_6,v_7\}$, $\{v_1,v_2,v_6\}$,
$\{v_1,v_2,v_4\}$, $\{v_1,v_4,v_6\}$, we have $c(v_2v_3)=2,\
c(v_1v_2)=4$, $c(v_3v_4)=4, c(v_4v_5)=1$, but then there is no
rainbow tree connecting  $\{v_4,v_6,v_7\}$. If $c(v_5v_6)\neq
c(v_5v_7)$, then $c(v_1v_3)\neq c(v_2v_3)$, let $c(v_1v_3)=1,\
c(v_2v_3)=2,\ c(v_5v_6)=3,\ c(v_5v_6)=4$. Consider
$\{v_1,v_4,v_6\}$, then the colors 2 and 4 must appear in the
triangle $v_3v_4v_5$. Consider $\{v_2,v_4,v_7\}$, then the colors 1
and 3 must appear in the triangle $v_3v_4v_5$, which is impossible.
So we consider $P_3$, if it exists, then it must be a close ear,
there are two cases, no matter which case presents, we can give a
rainbow coloring with $|G_3|-3$ colors, see Figure 3(8-9). Let
$H_1=G_3$, then $rx_3(G)\leq n-3$.

Combining all the above cases, $rx_3(G)=n-2$ if and only if $G$ is a
graph attaining the upper bound in Theorem \ref{thm6} or a graph in
Figure 2.
\end{proof}

\end{document}